\numberwithin{equation}{section}
\newcommand\qedsymbol{\hbox{$\Box$}}
\newcommand\qed{\relax\ifmmode\Box\else
  {\unskip\nobreak\hfil\penalty50\hskip1em\null\nobreak\hfil\qedsymbol
  \parfillskip=\z@\finalhyphendemerits=0\endgraf}\fi}
\newenvironment{proof}[1][{}]{\par\noindent Proof{#1}. }{\qed}
\newcommand{\bfzero}{{\mathbf 0}}
\newcommand{\End}{{\mathrm{End}}}
\newcommand{\Hom}{{\mathrm{Hom}}}
\newcommand{\Objects}{{\mathrm{Objects}}}
\newcommand{\Sh}{{\mathrm{Sh}}}
\newcommand{\Cobar}{{\mathrm{Cobar}}}
\newcommand{\sgn}{{\rm s g n}}
\newcommand{\hotimes}{{\,\hat{\otimes}\,}}
\newcommand{\Lie}{{\mathsf{Lie}}}
\newcommand{\Com}{{\mathsf{Com}}}
\newcommand{\coCom}{{\mathsf{coCom}}}
\newcommand{\Ch}{{\mathsf{Ch}}}
\newcommand{\bul}{{\bullet}}
\newcommand{\al}{{\alpha}}
\newcommand{\mC}{{\mathfrak{C}}}
\newcommand{\mS}{{\mathfrak{S}}}
\newcommand{\bs}{{\bf s}}
\newcommand{\bsi}{{\bf s}^{-1}\,}
\newcommand{\Om}{{\Omega}}
\newcommand{\si}{{\sigma}}
\newcommand{\ve}{{\varepsilon}}
\newcommand{\pa}{{\partial}}
\newcommand{\cF}{{\cal F}}
\newcommand{\cL}{{\cal L}}
\newcommand{\cC}{{\mathcal{C}}}
\newcommand{\bbk}{{\Bbbk}}
\newcommand{\bbZ}{{\mathbb Z}}
\newcommand{\D}{{\Delta}}
\newcommand{\wt}[1]{{\widetilde{#1}}}
\newcommand{\ti}[1]{{\tilde{#1}}}
\newcommand{\wh}[1]{{\widehat{#1}}}
\newcommand{\Map}{{\mathbf{map}}}
\newcommand{\maps}{\,\colon\,}
\newcommand{\und}[1]{{\underline{#1}}}
\newcommand{\sLie}{{\mathfrak{S}}\mathsf{Lie}_{\infty}}
\newcommand{\tLie}{{\mathfrak{S}}\mathsf{Lie}_{\infty}^{\mathrm{MC}}}
\newcommand{\tensor}{\otimes}
\newcommand{\xto}[1]{\xrightarrow{#1}}
\newcommand{\MC}{\mathrm{MC}}
\newcommand{\mMC}{\mathfrak{MC}}
\newcommand{\mSH}{{\mathfrak{SH}}}
\newcommand{\curv}{{\mathsf{curv}}}
\newcommand{\Shift}{{\mathrm{Shift}}}
\newtheorem{thm}{Theorem}[section]
\newtheorem{defi}[thm]{Definition}
\newtheorem{lem}[thm]{Lemma}
\newtheorem{prop}[thm]{Proposition}
\newtheorem{claim}[thm]{Claim}
\newtheorem{remark}[thm]{Remark}
\title{On an enhancement of the category of shifted $L_{\infty}$-algebras}
\author{Vasily A. Dolgushev and Christopher L. Rogers}
\date{}
\begin{document}

\maketitle

\begin{abstract}
  We construct a symmetric monoidal category $\tLie$ whose objects are
  shifted $L_{\infty}$-algebras equipped with a complete descending
  filtration. Morphisms of this category are ``enhanced'' infinity
  morphisms between shifted $L_{\infty}$-algebras.  We prove that any
  category enriched over $\tLie$ can be integrated to a
  simplicial category whose mapping spaces are Kan complexes.
  The advantage gained by using enhanced morphisms is that we can see much more of the simplicial world from the
    $L_{\infty}$-algebra point of view.  We use this construction in a subsequent paper
  \cite{HAform} to produce a simplicial model of a
    $(\infty,1)$-category whose objects are homotopy algebras of a
  fixed type.
\end{abstract}

\section{Introduction}
Let $L$ be a $\bbZ$-graded $\bbk$-vector space\footnote{In this paper, 
we assume that $\textrm{char}(\bbk) = 0$.}.
A shifted $L_{\infty}$-algebra structure on $L$ is a degree 
$1$ coderivation $Q$ of the cocommutative coalgebra 
$$
\und{S}(L) : = L \oplus S^2(L) \oplus S^3(L) \oplus \dots
$$
satisfying the Maurer-Cartan (MC) equation 
$$
[Q, Q] = 0\,.
$$
An $\infty$-morphism $F: (L,Q)  \to  (\wt{L}, \wt{Q})$ is a homomorphism of 
dg cocommutative coalgebras 
\begin{equation}
\label{infty-morph}
F : (\und{S}(L), Q) \to  (\und{S}(\wt{L}), \wt{Q})\,.
\end{equation}
It is often convenient to extend any such coalgebra homomorphism 
\eqref{infty-morph} to the homomorphism of coalgebras $S(L)$, $S(\wt{L})$ with counits 
by requiring that 
$$
F(1) = 1\,.
$$  

It is known that the category of shifted $L_{\infty}$-algebras
is naturally a symmetric monoidal category: given two shifted $L_{\infty}$-algebras 
$L_1$, $L_2$ their ``tensor product'' is defined as the direct sum 
$L_1 \oplus L_2$; given two $\infty$-morphisms 
$$
F_1 : \und{S}(L_1) \to \und{S}(\wt{L_1})\,, 
\qquad 
F_2 : \und{S}(L_2) \to \und{S}(\wt{L_2})\,, 
$$  
their tensor product\footnote{Even though the ``tensor product'' of (shifted) $L_{\infty}$-algebras 
is $\oplus$, it is more convenient to use the notation $\otimes$ for the tensor product 
of $\infty$-morphisms of (shifted) $L_{\infty}$-algebras.}
is defined as 
$$
F_1 \otimes F_2 : =  F_1 \otimes F_2 \Big|_{\und{S}(L_1 \oplus L_2)} : \und{S}(L_1 \oplus L_2) 
\to \und{S}(\wt{L}_1 \oplus \wt{L}_2)\,.
$$
It is clear that $\bfzero$ is the unit object of this symmetric monoidal category.

In this paper, we construct a useful enhancement $\tLie$ of the symmetric 
monoidal category of shifted $L_{\infty}$-algebras. Objects of the category $\tLie$
are shifted $L_{\infty}$-algebras equipped with a complete descending filtration.
A morphism from an object $L$ to an object $\wt{L}$ of $\tLie$ is a pair 
$$
(\al, F)
$$
where $\al$ is a MC element of $\wt{L}$, $F$ is a continuous 
$\infty$-morphism from $L$ to $\wt{L}^{\al}$, and the shifted $L_{\infty}$-algebra
$\wt{L}^{\al}$ is obtained from $\wt{L}$ via twisting by $\al$.

We use the Getzler-Hinich construction \cite{Ezra-infty},  \cite{Hinich:1997} to show that 
every $\tLie$-enriched category can be ``integrated'' to a simplicial category with 
mapping spaces being Kan complexes. This result illuminates
the main reason for introducing enhanced morphisms. Indeed, 
to an ordinary $\infty$-morphism $F \maps L_1
 \to L_2$, the Getzler-Hinich construction assigns a map between simplicial sets which must preserve the canonical
 base point corresponding to the trivial MC element $0$. This is too
 restrictive, since it prevents us from modeling more general simplicial
 maps. By using enhanced morphisms, we see
more of the simplicial hom-set at the level of
 $\sLie$-algebras. We can, for example, model simplicial maps
 from the point $\Delta^{0}$ as $\tLie$ morphisms originating from the unit object $\bfzero$.

In a subsequent paper \cite{HAform}, we show that homotopy algebras of 
a fixed type form a $\tLie$-enriched category and use this fact 
to produce a simplicial enrichment of the category 
of homotopy algebras. Furthermore, we prove that this
  simplicial category is a model for the $(\infty,1)$-category of  homotopy algebras.

~\\

\noindent
\textbf{Acknowledgements:} We would like to thank Thomas Willwacher 
for useful discussions, and Jim Stasheff for helpful comments.  V.A.D. and C.L.R. acknowledge NSF grant DMS-1161867.
C.L.R. also acknowledges support from the German Research Foundation 
(Deutsche Forschungsgemeinschaft (DFG)) through the Institutional Strategy of the
University of G\"ottingen.  V.A.D. also acknowledges a partial support from  
NSF grant DMS-1501001.

\subsubsection*{Notation and conventions} 
The ground field $\bbk$ has characteristic zero.
For most of the algebraic structures considered 
here, the underlying symmetric monoidal category is 
the category $\Ch_{\bbk}$ of unbounded cochain complexes 
of $\bbk$-vector spaces.  We will frequently use the ubiquitous
combination ``dg'' (differential graded) to refer to algebraic objects 
in $\Ch_{\bbk}$\,. 
For a cochain complex $V$, we denote 
by $\bs V$ (resp. by $\bs^{-1} V$) the suspension (resp. the 
desuspension) of $V$\,. In other words, 
$$
\big(\bs V\big)^{\bul} = V^{\bul-1}\,,  \qquad
\big(\bs^{-1} V\big)^{\bul} = V^{\bul+1}\,. 
$$

Any $\bbZ$-graded vector space $V$ is tacitly considered 
as the cochain complex with the zero differential. 

For a pair $V$, $W$ of $\bbZ$-graded vector spaces, we denote by 
$$
\Hom (V,W)
$$
the corresponding inner-hom object in the category of $\bbZ$-graded vector spaces, i.e. 
\begin{equation}
\label{Hom-gr} 
\Hom (V,W) : = \bigoplus_{m} \Hom^m_{\bbk}(V, W)\,,
\end{equation}
where $\Hom^m_{\bbk}(V, W)$ consists of $\bbk$-linear maps $f : V \to W$ such that 
$$
f(V^{\bul}) \subset W^{\bul+m}\,.
$$

The notation $S_{n}$ is reserved for the symmetric group 
on $n$ letters and  $\Sh_{p_1, \dots, p_k}$ denotes 
the subset of $(p_1, \dots, p_k)$-shuffles 
in $S_n$, i.e.  $\Sh_{p_1, \dots, p_k}$ consists of 
elements $\si \in S_n$, $n= p_1 +p_2 + \dots + p_k$ such that 
$$
\begin{array}{c}
\si(1) <  \si(2) < \dots < \si(p_1),  \\[0.3cm]
\si(p_1+1) <  \si(p_1+2) < \dots < \si(p_1+p_2), \\[0.3cm]
\dots   \\[0.3cm]
\si(n-p_k+1) <  \si(n-p_k+2) < \dots < \si(n)\,.
\end{array}
$$

We tacitly assume the Koszul sign rule. In particular,   
$$
(-1)^{\ve(\si; v_1, \dots, v_m)}
$$ 
will always denote the sign factor corresponding to the permutation $\si \in S_m$ of 
homogeneous vectors $v_1, v_2, \dots, v_m$. Namely, 
\begin{equation}
\label{ve-si-vvv}
(-1)^{\ve(\si; v_1, \dots, v_m)} := \prod_{(i < j)} (-1)^{|v_i | |v_j|}\,,
\end{equation}
where the product is taken over all inversions $(i < j)$ of $\si \in S_m$.

For a finite group $G$ acting on a cochain complex
(or a graded vector space) $V$, we denote by 
$$
V^G  \qquad \textrm{ and } \qquad  V_G,
$$
respectively, the subcomplex of $G$-invariants in $V$ and
the quotient complex of $G$-coinvariants. Using the advantage of 
the zero characteristic, we often identify $V_G$ with $V^G$ 
via this isomorphism
\begin{equation}
\label{coinvar-invar}
v ~\mapsto~ \sum_{g \in G} g (v) \maps V_G \to V^G\,. 
\end{equation}

For a graded vector space (or a cochain complex) $V$,
the notation $S(V)$ (resp. $\und{S}(V)$) is reserved for the
underlying vector space of the symmetric algebra (resp. the truncated symmetric algebra) of $V$: 
$$
S(V) = \bbk \oplus V \oplus S^2(V) \oplus S^3(V) \oplus \dots\,, 
$$ 
$$
\und{S}(V) =  V \oplus S^2(V) \oplus S^3(V) \oplus \dots\,,
$$
where 
$$
S^n(V) = \big( V^{\otimes_{\bbk}\, n} \big)_{S_n}\,.
$$

We denote by $\Com$ (resp. $\Lie$) the operad governing 
commutative (and associative) algebras without unit 
(resp.  the operad governing Lie algebras).
Furthermore, we denote by $\coCom$ 
the cooperad which is obtained from $\Com$ by 
taking the linear dual. The coalgebras over $\coCom$ are 
cocommutative (and coassociative) coalgebras without counit. 

The notation $\Cobar$ is reserved for the cobar construction \cite[Section 3.7]{notes}. 
 
For an operad (resp. a cooperad) $P$ and a cochain complex $V$ we denote by 
$P(V)$ the free $P$-algebra (resp. the cofree\footnote{In this paper we only consider 
nilpotent coalgebras.} $P$-coalgebra) generated by $V$: 
\begin{equation}
\label{P-Schur-V}
P(V) : = \bigoplus_{n \ge 0} \Big( P(n) \otimes V^{\otimes \, n} \Big)_{S_n}\,.
\end{equation}
For example,
$$
\Com(V) =  \coCom(V) = \und{S}(V)\, 
$$
as vector spaces.
We denote by $\mS$ the underlying collection of the 
endomorphism operad 
$$
\End_{\bsi \bbk}
$$
of the 1-dimensional space $\bsi \bbk$ placed in degree $-1$. 
The $n$-the space of $\mS$ is 
$$
\mS(n) = \sgn_n \otimes \bs^{n-1} \bbk\,,
$$
where $\sgn_n$ denotes the sign representation of the symmetric group $S_n$\,.
Recall that $\mS$ is naturally an operad and a cooperad.  

For a (co)operad $P$, we denote 
by $\mS P$ the (co)operad which is obtained from 
$P$ by tensoring with $\mS$: 
$$
\mS P : = \mS \otimes P\,.   
$$
It is clear that tensoring with 
$$
\mS^{-1} : = \End_{\bs \bbk}
$$
gives us the inverse of the operation $P \mapsto \mS P$\,.

For example, the dg operad $\Lie_{\infty} : = \Cobar(\mS^{-1}\coCom)$ governs 
$L_{\infty}$-algebras and the dg operad
\begin{equation}
\label{sLie-oper}
\sLie = \Cobar(\coCom)
\end{equation}
governs $\sLie$-algebras.

In this paper, we often call $\sLie$-algebras {\it shifted $L_{\infty}$-algebras}. 
Although a $\sLie$-algebra structure on a cochain complex $V$ is the same thing 
as an $L_{\infty}$ structure on $\bs V$\,, working with $\sLie$-algebras has 
important technical advantages. This is why we prefer to deal with shifted 
$L_{\infty}$-algebras (a.k.a.  $\sLie$-algebras) from the outset. 

The abbreviation ``MC'' is reserved for the term ``Maurer-Cartan''.

\section{Filtered $\sLie$ algebras, Maurer-Cartan (MC) elements, and twisting}
\label{filtered_sec}

Let $V$ be a cochain complex and $\cC$ be a coaugmented dg cooperad. 
We recall\footnote{See, for example, \cite[Corollary 5.3]{notes}.} that $\Cobar(\cC)$-algebra
structures on $V$ are in bijection with coderivations $Q$ of the $\cC$-coalgebra 
$\cC(V)$ satisfying the condition 
$$
Q \Big|_{V} = 0
$$ 
and the MC equation
$$
[\pa, Q] + \frac{1}{2} [Q,Q] = 0\,, 
$$
where $\pa$ comes from the differential on $V$ and $\cC$ (if $\cC$ has a 
non-zero differential).
 
Thus, since $\sLie$-algebras are $\Cobar(\coCom)$-algebras, a $\sLie$-structure 
on a cochain complex $(L, \pa)$ is a degree $1$ coderivation $Q$ of the 
cofree cocommutative coalgebra 
\begin{equation}
\label{coCom-L}
\coCom(L) = \und{S}(L) 
\end{equation}
satisfying the MC equation 
\begin{equation}
\label{MC-Q}
[\pa, Q] + \frac{1}{2} [Q,Q] = 0 
\end{equation}
and the additional condition
\begin{equation}
\label{Q-on-L}
Q \Big|_{L} = 0\,. 
\end{equation}

Introducing a degree $1$ coderivation $Q$ on \eqref{coCom-L} 
satisfying \eqref{Q-on-L} 
is equivalent to introducing the infinite collection of degree $1$ operations  
\begin{equation}
\label{mult-brack}
\{\underbrace{\cdot, \cdot, \dots, \cdot}_{m \textrm{ inputs}}\}  : S^m(L) \to L  
\end{equation}
for all $m \ge 2$. Namely, 
\begin{equation}
\label{mult-brack-Q}
\{v_1, v_2, \dots, v_m\} : = p_{L} \circ Q(v_1 v_2 \dots v_m)\,,
\end{equation}
where $p_L$ is the canonical projection $p_L : \und{S}(L) \to L$\,.

MC equation \eqref{MC-Q} is equivalent to 
the following sequence of relations on the brackets $\{\cdot,\cdot, \dots, \cdot\}$: 
\begin{multline}
\label{sLie-relations}
\pa \{v_1, v_2, \dots, v_m\} + 
\sum_{i=1}^m (-1)^{|v_1| + \dots + |v_{i-1}|} 
\{v_1,  \dots, v_{i-1},  \pa v_i, v_{i+1}, \dots, v_{m}\} \\
+ \sum_{k=2}^{m-1} 
\sum_{\si \in \Sh_{k, m-k} } 
(-1)^{\ve(\si; v_1, \dots, v_m)}
\{ \{v_{\si(1)}, \dots, v_{\si(k)} \}, v_{\si(k+1)}, \dots, v_{\si(m)} \} = 0\,,
\end{multline}
where $(-1)^{\ve(\si; v_1, \dots, v_m)}$ is the Koszul sign factor (see eq.  \eqref{ve-si-vvv}).

An $\infty$-morphism $F$ from a $\sLie$-algebra $(L, \pa, Q)$ to a $\sLie$-algebra 
$(\ti{L}, \ti{\pa}, \ti{Q})$ is a homomorphism 
$$
F : ( \und{S}(L), \pa + Q) \to (\und{S}(\ti{L}), \ti{\pa} + \ti{Q})   
$$
of the corresponding dg cocommutative coalgebras. 
Recall that any such coalgebra homomorphism $F$ is uniquely determined 
by its composition $F'$ with the projection $p_{\ti{L}}: \und{S}(\ti{L}) \to \ti{L}$: 
$$
F' : = p_{\ti{L}} \circ F : \und{S}(L) \to \ti{L} 
$$  
More precisely, given a degree zero map $F'  : \und{S}(L) \to \ti{L}$, the homomorphism 
of coalgebras $F$ is restored by the formula  
\begin{multline}
\label{F-F-pr}
F(v_1 v_2 \dots v_n) = 
 \sum_{t \ge 1}
\sum_{\substack{ k_1+ \dots + k_t = n \\[0.1cm] k_j \ge 1 }} 
\sum_{\si \in \mSH_{k_1, k_2, \dots, k_t}}  
F'(v_{\si(1)} \dots v_{\si(k_1)}) F'(v_{\si(k_1+1)} \dots v_{\si(k_1 + k_2)}) \dots \\
\dots F'(v_{\si(n-k_t+1)} \dots v_{\si(n)})\,,
\end{multline}
where $\mSH_{k_1, k_2, \dots, k_t}$ 
is  the subset of permutations in $\Sh_{k_1, k_2, \dots, k_t}$ satisfying 
the condition 
$$
\si(1) < \si(k_1+1) <  \si(k_1+k_2+1) < \dots < \si(n-k_t+1)\,.
$$
The compatibility of $F$ with the differentials $\pa + Q$ and $\pa + \ti{Q}$
is equivalent to the sequence of equations 
\begin{multline}
\label{F-and-diff-Q}
\ti{\pa} F' (v_1, v_2, \dots, v_m) - \sum_{i=1}^m (-1)^{|v_1| + \dots + |v_{i-1}|} F'(v_1, \dots, v_{i-1}, \pa v_i, v_{i+1}, \dots v_m) \\
- \sum_{p=2}^{m-1} \sum_{\si \in \Sh_{p, m-p}}  (-1)^{\ve(\si; v_1, \dots, v_m)}
F' (\{ v_{\si(1)}, \dots, v_{\si(p)} \}, v_{\si(p+1)}, \dots,  v_{\si(m)}) =  \\
\sum_{\substack{p_1 + p_2 + \dots + p_k = m \\ p_j \ge 1}}^{k \ge 2} \,
\sum_{\tau \in \Sh_{p_1, p_2, \dots, p_k} }
(-1)^{\ve(\tau; v_1, \dots, v_m)}  \{ F'(v_{\tau(1)}, \dots, v_{\tau(p_1)}), F'(v_{\tau(p_1+1)}, \dots, v_{\tau(p_1+p_2)}), 
\dots \\ \dots, F' (v_{\tau(m-p_k+1)}, \dots, v_{\tau(m)}) \}\ti{~}\,.
\end{multline}

\begin{defi}
\label{dfn:sLie-filtered}
We say that a $\sLie$ algebra $(L, \pa, \{\cdot,\cdot\},
\{\cdot,\cdot,\cdot\},\ldots)$ is \emph{filtered} if the underlying
complex $(L, \pa)$ is equipped with a complete descending filtration,
\begin{equation}
\label{filtr-L}
L = \cF_{1}L \supset \cF_{2}L \supset  \cF_{3}L  \cdots
\end{equation}
\begin{equation}
\label{L-complete}
L =\varprojlim_{k} L/\cF_{k}L\,,
\end{equation}
which is compatible with the brackets, i.e.
\[
\Bigl \{\cF_{i_{1}}L,\cF_{i_{2}}L,\ldots,\cF_{i_{m}}L \Bigr \} \subseteq
\cF_{i_{1} + i_{2} + \cdots + i_{m}} L \quad \forall ~~ m >1.
\]
\end{defi}

The filtration \eqref{filtr-L} induces a natural descending 
filtration and hence a topology on $\und{S}(L)$. 
{\bf In this paper,    
we tacitly assume that $\infty$-morphisms of filtered $\sLie$-algebras 
are continuous with respect to this topology.} 

Conditions \eqref{L-complete} and $L = \cF_1 L$ imply that 
the $\sLie$-algebra $L$ is pronilpotent, and hence the left hand side of
the \textit{MC equation}:
\begin{equation} 
\label{MC_equation}
\pa \alpha + \sum_{m=2}^{\infty} \frac{1}{m!}
\bigl \{\underset{m}{\underbrace{\alpha,\alpha,\ldots,\alpha}} \bigr \} = 0
\end{equation}
is well-defined for every $\alpha \in L$. 

A \und{degree zero} element $\al \in L$ satisfying equation \eqref{MC_equation} 
is called a \textit{MC element}  of $L$.  In this paper, the notation 
$$
\MC(L)
$$
is reserved for the set of MC elements of a filtered $\sLie$-algebra $L$.

We next record some well known\footnote{For more details about twisting we refer the 
reader to \cite[Section 2.4]{thesis}, \cite{DeligneTw}, \cite[Section 4]{Ezra-infty}}
facts regarding MC elements:
\begin{enumerate}
\item{
Given a filtered $\sLie$ algebra $L=(L, \pa, \{\cdot,\cdot\}\{\cdot,\cdot,\cdot \},\ldots)$
and a MC element $\alpha \in L$, we can construct a new filtered
$\sLie$ structure $L^{\alpha} = (L , \pa^{\al}, \{\cdot,\cdot\}^{\alpha},\{\cdot,\cdot,\cdot\}^{\alpha},\ldots)$ 
on the cochain complex $(L, \pa^{\al})$ with the new differential 
\begin{equation}
\label{diff-twisted}
\pa^{\al} (v) : = \pa (v)  + \sum_{k=1}^{\infty} \frac{1}{k!} \{\underbrace{\al, \dots, \al,}_{k \textrm{ times}} v\}  
\end{equation}
and the new multi-brackets
\begin{equation}
\label{Linft_twisted}
\{v_1,v_2,\cdots, v_m \}^{\alpha}:=
\sum_{k=0}^{\infty} \frac{1}{k!} \{\underbrace{\al, \dots, \al,}_{k \textrm{ times}}
v_1, v_2,\cdots, v_m \}
\end{equation}
}

\item{ If $F \maps L \to \ti{L}$ is an $\infty$-morphism of $\sLie$ algebras and
$\alpha \in L$ is a MC element then
\begin{equation} 
\label{F_MC_elt_eq}
F_*(\alpha): = \sum_{k=1}^{\infty} \frac{1}{k!} p_{\ti{L}} \circ F \bigl ( \al^k \bigr )
\end{equation}
is a MC element of $\ti{L}$ \footnote{Note that $F_*(\alpha)$
  is well defined for any degree 0 element $\alpha$.}.
}

\item{
If $F \maps L \to \ti{L}$ is an $\infty$-morphism of $\sLie$ algebras and
$\alpha \in L$ is a MC element then we can construct a new 
$\infty$-morphism
\[
F^{\alpha} \maps L^{\alpha} \to \ti{L} ^{F_*(\alpha)}
\]
with 
\begin{equation} 
\label{twisted_map}
p_{\ti{L}} \circ F^{\alpha} (v_1 v_2 \dots v_m) = \sum_{k=0}^{\infty} \frac{1}{k!}\, 
p_{\ti{L}} \circ F (\al^k \, v_1 v_2 \dots v_m)\,.
\end{equation}
}

\end{enumerate}
Note that the infinite sums in equations \eqref{F_MC_elt_eq} and \eqref{twisted_map} are well defined because $F$ is continuous and $L = \cF_1 L$\,.

More generally, we denote by $\curv$ the map of sets $L^0 \mapsto L^1$ 
given by the formula 
\begin{equation}
\label{def-curv}
\curv(\al) : = \pa \al + \sum_{m \ge 1} \frac{1}{m!} \{\al, \dots, \al\}_m\,. 
\end{equation}
For example, elements $\al \in L^0$ satisfying $\curv(\al) = 0$ are precisely 
MC elements of the $\sLie$-algebra $L$.
Various useful properties of the operation $\curv$ are listed in the following proposition. 
\begin{prop}
\label{prop:curv}
Let $L$ and $\ti{L}$ be filtered $\sLie$-algebras and $F$
be a continuous $\infty$-morphism from $L$ to $\ti{L}$. 
Then for every $\al, \beta \in L^0,  v \in L$ we have 
\begin{equation}
\label{Bianchi}
\pa(\curv(\al)) + \sum_{m=1}^{\infty} \frac{1}{m!} \{\underbrace{\al,
  \dots, \al}_{\text{$m$ {\rm times}}}, \curv(\al)\}= 0\,,
\end{equation}
\begin{equation}
\label{U-star-curv}
\curv \big( F_*(\al) \big) = 
\sum_{m \ge 0} \frac{1}{m!} F' \big( \al^m  \curv(\al) \big)\,,
\end{equation}

\begin{equation}
\label{square-curv}
\pa^{\al} \circ \pa^{\al} (v) =  - \{\curv(\al), v\}^{\al}_2\,,
\end{equation}

\begin{equation}
\label{curv-sum}
\curv(\al + \beta) = \curv(\al) +  \pa^{\al}(\beta) +
\sum_{m=2}^{\infty} \frac{1}{m!} \{\underbrace{\beta, \dots,
  \beta}_{\text{$m$ {\rm  times}}} \}^{\al}\,.
\end{equation}
\end{prop}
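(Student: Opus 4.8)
The strategy is to package everything at the level of the (counital) cofree cocommutative coalgebras $S(L)$, $S(\ti L)$, using for each degree zero element $\al$ the group-like element $e^\al:=\sum_{k\ge 0}\frac1{k!}\al^k$, which is a bona fide element of the completed coalgebra because $L=\cF_1L$. The whole proposition is driven by the single identity
\begin{equation}
\label{master}
(\pa+Q)(e^\al)=\curv(\al)\cdot e^\al\,,
\end{equation}
which one verifies by expanding $\pa(\al^k)=k(\pa\al)\al^{k-1}$, using the coderivation property to write $Q(\al^k)=\sum_{j\ge2}\binom kj\{\al,\dots,\al\}_j\,\al^{k-j}$, and reassembling the binomial sums. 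Two further bookkeeping facts are needed. First, multiplication by $e^\al$, i.e.\ $\phi_\al\maps x\mapsto e^\al\cdot x$, is a coalgebra automorphism of $S(L)$ with inverse $\phi_{-\al}$ and with $\phi_\al\circ\phi_\beta=\phi_{\al+\beta}$, because $e^\al$ is group-like and $e^\al e^\beta=e^{\al+\beta}$. Second, conjugation by $\phi_\al$ realizes twisting: the coderivation $D_\al:=\phi_{-\al}\circ(\pa+Q)\circ\phi_\al$ of $S(L)$ has corestriction equal to $\curv(\al)$ on $S^0(L)$, to $\pa^\al$ on $L$, and to $\{\cdot,\dots,\cdot\}_m^\al$ on $S^m(L)$ for $m\ge2$ — a direct check against \eqref{diff-twisted} and \eqref{Linft_twisted}. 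Finally $D_\al^2=\phi_{-\al}\circ(\pa+Q)^2\circ\phi_\al=0$, since $(\pa+Q)^2=\tfrac12[\pa+Q,\pa+Q]=0$ by the MC equation \eqref{MC-Q}.

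Now \eqref{Bianchi} and \eqref{square-curv} are read off from the low-degree part of $D_\al^2=0$. By \eqref{master}, $D_\al(1)=\curv(\al)\in L^1$, and for $v\in L$ the coderivation formula gives $D_\al(v)=\curv(\al)\cdot v+\pa^\al v$. Expanding $0=D_\al^2(v)=D_\al\bigl(\curv(\al)\,v\bigr)+D_\al\bigl(\pa^\al v\bigr)$ and sorting by symmetric-algebra degree, the $S^1(L)$-part is $\{\curv(\al),v\}_2^\al+\pa^\al\pa^\al(v)=0$, which is \eqref{square-curv}; the $S^2(L)$-part collapses (its two $\curv(\al)\!\cdot\!\pa^\al v$-type contributions cancel, using $|\curv(\al)|=1$ and the Koszul signs) to $\pa^\al(\curv(\al))\cdot v=0$ for all $v$, i.e.\ $\pa^\al(\curv(\al))=0$, which upon unwinding $\pa^\al$ is exactly \eqref{Bianchi}. (Equivalently, \eqref{Bianchi} falls out of $D_\al^2(1)=0$ once one notes $\curv(\al)\cdot\curv(\al)=0$, an odd element squaring to zero in $S^2(L)$.)

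For \eqref{U-star-curv} use that a homomorphism of counital dg coalgebras sends group-likes to group-likes, so $F(e^\al)=e^{F_*(\al)}$ (its $S^1$-component is $\sum_{k\ge1}\frac1{k!}p_{\ti L}F(\al^k)=F_*(\al)$). Applying $F$ to \eqref{master} and using $F\circ(\pa+Q)=(\ti\pa+\ti Q)\circ F$ gives $(\ti\pa+\ti Q)(e^{F_*(\al)})=F\bigl(\curv(\al)\cdot e^\al\bigr)$; the left side is $\curv(F_*(\al))\cdot e^{F_*(\al)}$ by \eqref{master} for $\ti L$, while the right side — being $(e^{F_*(\al)},e^{F_*(\al)})$-primitive, hence of the form $w\cdot e^{F_*(\al)}$ with $w=p_{\ti L}F\bigl(\curv(\al)e^\al\bigr)=\sum_{m\ge0}\frac1{m!}F'(\al^m\curv(\al))$ — gives \eqref{U-star-curv} after cancelling the common factor $e^{F_*(\al)}$. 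For \eqref{curv-sum}, $\phi_{\al+\beta}=\phi_\al\circ\phi_\beta$ gives $D_{\al+\beta}=\phi_{-\beta}\circ D_\al\circ\phi_\beta$, hence $\curv(\al+\beta)=D_{\al+\beta}(1)=\phi_{-\beta}\bigl(D_\al(e^\beta)\bigr)$; rerunning the binomial reassembly of \eqref{master} with the coderivation $D_\al$ in place of $\pa+Q$ yields $D_\al(e^\beta)=\bigl(\curv(\al)+\pa^\al\beta+\sum_{m\ge2}\frac1{m!}\{\beta,\dots,\beta\}_m^\al\bigr)e^\beta$, and dividing by $e^\beta$ is precisely \eqref{curv-sum}.

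The one genuinely delicate step is the Koszul-sign bookkeeping inside the coderivation expansion that underlies \eqref{square-curv} and \eqref{Bianchi}; everything else is routine — convergence of the infinite sums is guaranteed by $L=\cF_1L$, completeness of the filtration, and continuity of $F$, and the rest is a mechanical unwinding of \eqref{diff-twisted}–\eqref{twisted_map}. One could instead establish all four identities by expanding the definitions directly against \eqref{sLie-relations} and \eqref{F-and-diff-Q}, but the coalgebraic route keeps the combinatorics under control.
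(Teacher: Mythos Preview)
Your proof is correct. For \eqref{U-star-curv} your approach is essentially identical to the paper's: both use the group-like element $e^{\al}$, the master identity $(\pa+Q)(e^{\al})=\curv(\al)\,e^{\al}$ (the paper's equation $\ti{Q}(\exp(\beta)-1)=\exp(\beta)\curv(\beta)$), the fact that $F(e^{\al})=e^{F_*(\al)}$, and then read off the answer after applying $p_{\ti{L}}$.

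Where you differ is in the other three identities. The paper disposes of \eqref{Bianchi} by citing Getzler, and of \eqref{square-curv} and \eqref{curv-sum} by appealing directly to the defining relations \eqref{sLie-relations}. You instead push the coalgebraic machinery uniformly: you conjugate $\pa+Q$ by the coalgebra automorphism $\phi_{\al}$ to obtain the curved coderivation $D_{\al}$, then read off \eqref{Bianchi} and \eqref{square-curv} from the low-degree part of $D_{\al}^{2}=0$, and \eqref{curv-sum} from the twisted analogue of the master identity applied to $e^{\beta}$. This is a genuinely more conceptual packaging --- it makes clear that all four statements are facets of the single fact that $(\pa+Q)^{2}=0$ survives conjugation --- and it avoids the combinatorics of expanding shuffle sums against \eqref{sLie-relations}. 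The paper's route for \eqref{square-curv} and \eqref{curv-sum} is shorter to state but leaves the actual computation to the reader; yours is self-contained and exhibits why the identities hold.
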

\begin{proof}
Equation \eqref{Bianchi} is proved in \cite[Lemma 4.5]{Ezra-infty}. 
Equations \eqref{square-curv} and \eqref{curv-sum} follow from relations \eqref{sLie-relations}.

To prove \eqref{U-star-curv} we use equation \eqref{F-F-pr} which implies that 
\begin{equation}
\label{U-cxp-al}
F \big( \exp(\al)-1 \big) = \exp\big( F_*(\al) \big) -1\,, 
\end{equation}
where $\exp(\al)-1$ (resp. $ \exp\big( F_*(\al) \big) -1$) is considered 
as the element of the completion of $\und{S}(L)$ (resp. $\und{S}(\ti{L})$)
defined by the corresponding Taylor series. 

A similar computation shows that, for every $\beta \in \ti{L}$, we have 
\begin{equation}
\label{ti-Q-cxp-beta}
\ti{Q} \big( \exp(\beta) -1 \big) = \exp(\beta) \curv(\beta)\,,
\end{equation}
where $\ti{Q}$ is the coderivation\footnote{Here we tacitly assume that 
$Q$ and $\ti{Q}$ are extended in the natural way to the completions of 
$\und{S}(L)$ and $\und{S}(\ti{L})$, respectively.} 
of $\und{S}(\ti{L})$ corresponding to the $\sLie$-algebra structure on $\ti{L}$. 

On the other hand, 
$$
\ti{Q} \circ F = F \circ Q\,.
$$

Hence \eqref{U-cxp-al} and \eqref{ti-Q-cxp-beta} imply that
\begin{equation}
\label{curv-done}
\ti{Q} \big( \exp \big( F_*(\al) \big) - 1 \big) =
F \big( \exp(\al) \curv(\al) \big)\,.
\end{equation}

Applying the canonical projection $p_{\ti{L}}$ to both sides of 
\eqref{curv-done} we get desired identity \eqref{U-star-curv}. 
\end{proof}

\begin{remark}
\label{rem:diff-brack1}
It is often convenient to ``absorb'' the differential $\pa$ on a $\sLie$-algebra $L$ into the 
collection of multi-brackets \eqref{mult-brack} treating it as the 
unary operation: 
\begin{equation}
\label{brack-1}
\{ \cdot \} : = \pa : L \to L\,.
\end{equation}
Then relations \eqref{sLie-relations} can be rewritten in the more concise form
\begin{equation}
\label{sLie-rel-simpler}
\sum_{k=1}^{m} 
\sum_{\si \in \Sh_{k, m-k} } 
(-1)^{\ve(\si; v_1, \dots, v_m)}
\{ \{v_{\si(1)}, \dots, v_{\si(k)} \}, v_{\si(k+1)}, \dots, v_{\si(m)} \} =0\,,
\end{equation}
where $(-1)^{\ve(\si; v_1, \dots, v_m)}$ is, as above, the sign coming
from the Koszul rule. 

Thus one can say that a $\sLie$-structure on a cochain complex 
$(L, \pa)$ is a collection of degree $1$ multi-brackets
\begin{equation}
\label{mult-brack-more}
\{\underbrace{\cdot, \cdot, \dots, \cdot}_{m \textrm{ inputs}}\}  : S^m(L) \to L  
\end{equation}
for $m \ge 1$ satisfying \eqref{sLie-rel-simpler} and the condition 
\begin{equation}
\label{brack-1-diff}
\{v \}  = \pa \, v\,, \qquad \forall ~~ v \in L\,.
\end{equation}
\end{remark}

\section{The symmetric monoidal category $\tLie$} 

In this section we introduce the main hero of this note: an ``enhanced'' version $\tLie$ of the category of 
$\sLie$-algebras. Then, we will show that $\tLie$ is a symmetric monoidal category.      

Objects of the category $\tLie$ are filtered $\sLie$ algebras and 
morphisms are defined in the following way:
\begin{defi}
\label{dfn:enhanced}
An {\bf enhanced morphism} 
\[
L_1 \xto{(\al, F)} L_2
\]
between filtered $\sLie$ algebras is a pair consisting of a MC element
$\alpha \in L_2$ and a continuous $\infty$-morphism
$F \maps L_1 \to L^{\alpha}_2$.
\end{defi}
Note that every $\infty$-morphism $F$
of filtered $\sLie$ algebras is canonically the
enhanced morphism $(0,F)$.

Let $(\al, F)$ be an enhanced morphism from $L_1$ to $L_2$. 
Then $F$ is a homomorphism of dg $\coCom$-coalgebras 
\begin{equation}
\label{F-coalg-morph}
F : (\und{S}(L_1), Q_1) ~\to~ (\und{S}(L_2), Q^{\al}_2)\,,
\end{equation}
where $Q_1$ and $Q^{\al}_2$ are the codifferentials defining 
the $\sLie$-structures on $L_1$ and $L^{\al}_2$, respectively. 

To define the composition of enhanced morphisms, we 
need to extend the codifferentials $Q_1$ and $Q_2$ to
the completions 
\begin{equation}
\label{comp-full-symm}
S(L_1)\,\hat{} ~~\textrm{ and }~~ S(L_2)\,\hat{}
\end{equation}
with respect to the natural descending filtrations coming 
from those on $L_1$ and $L_2$, respectively. We do this
via extending $Q_1$ and $Q_2$ by continuity and setting
\begin{equation}
\label{ext-ing-Q12}
Q_1 (1) = Q_2(1) = 0\,.  
\end{equation}

\begin{remark}
\label{rem:coalg-comp}
Note that the completion $S(L)\, \hat{}$ of $S(L)$ with respect 
to the filtration coming from some $L$ is, strictly speaking, not 
a coalgebra because the natural 
analog of the comultiplication $\D$ on $S(L)\, \hat{}$ 
lands in the completed tensor product
$$
S(L)\, \hat{} ~ \wh{\otimes} ~ S(L)\, \hat{}\,.
$$
This subtlety should be kept in mind when writing 
equations like
\begin{equation}
\label{Delta-Q}
\D Q (X) = (Q \otimes 1 + 1 \otimes Q) \D(X) 
\end{equation}
or claiming that $Q$ is a coderivation of the ``coalgebra''
$S(L)\, \hat{}$.  
\end{remark}

Passing to the completions allows us to consider such 
elements as $e^{\al}$, where $\al \in L_2^0$, 
and  rewrite the MC equation \eqref{MC_equation} for $\al$ 
in the form 
\begin{equation}
\label{MC-eq-easy}
Q (e^{\al}) = 0\,.
\end{equation}

Let us also extend the coalgebra morphism \eqref{F-coalg-morph} to the morphism 
\begin{equation}
\label{F-coalg-morph-hat}
F : (S(L_1)\,\hat{}, Q_1) ~\to~ (S(L_2)\,\hat{}, Q^{\al}_2)
\end{equation}
by continuity and declaring that 
\begin{equation}
\label{F-at-1}
F(1) : = 1\,. 
\end{equation}
It is easy to see that this extension is compatible with the comultiplications 
on $S(L_1)\,\hat{}$ and $S(L_2)\,\hat{}$, respectively:
\begin{equation}
\label{new-F-Delta}
\Delta \circ F = (F \otimes F) \circ \Delta\,.
\end{equation}

Since the codifferential $Q_2^{\al}$ can be written in the form 
\begin{equation}
\label{Q-2-twisted}
Q_2^{\al} = e^{-\al} Q_2 e^{\al}
\end{equation}
the compatibility of $F$ with $Q_1$ and $Q_2^{\al}$ is equivalent to the equation  
\begin{equation}
\label{F-Q1-Q2-al}
Q_2 (e^{\al} F) = (e^{\al} F) Q_1\,.
\end{equation}

On the other hand, $e^{\al} F$ is obviously compatible with
comultiplications on  $S(L_1)\,\hat{}$ and $S(L_2)\,\hat{}$. 
Thus we arrive at the following statement:
\begin{claim}
\label{cl:enhanced}
Every enhanced morphism $(\al, F)$ from $L_1$ to $L_2$
gives rise to the following homomorphism of cocommutative
``coalgebras''
$$
U_{\al, F} : (S(L_1)\,\hat{}, Q_1) \to (S(L_2)\,\hat{}, Q_2)
$$
\begin{equation}
\label{exp-al-F}
U_{\al, F} (X) : = e^{\al} F(X)\,,
\end{equation}
where 
\begin{equation}
\label{F-of-1}
F(1) : = 1\,,
\end{equation}
and $F$ is extended to the completion $S(L_1)\,\hat{}$ by continuity. \qed
\end{claim}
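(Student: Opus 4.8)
The plan is to repackage the data of an enhanced morphism as a single intertwiner and then ``absorb'' the twist. Unwinding Definition~\ref{dfn:enhanced}, an enhanced morphism $(\al,F)\maps L_1\to L_2$ supplies a MC element $\al\in\MC(L_2)$ together with a continuous $\infty$-morphism $F\maps L_1\to L_2^{\al}$, i.e.\ a continuous homomorphism of dg cocommutative coalgebras $F\maps(\und{S}(L_1),Q_1)\to(\und{S}(L_2),Q_2^{\al})$, so that $Q_2^{\al}\circ F=F\circ Q_1$ on $\und{S}(L_1)$. First I would extend $F$ to the completions $S(L_1)\,\hat{}$, $S(L_2)\,\hat{}$ by continuity with $F(1):=1$, as in \eqref{F-coalg-morph-hat}--\eqref{F-at-1}; since $Q_1(1)=Q_2^{\al}(1)=0$ and $F(1)=1$, the relation $Q_2^{\al}\circ F=F\circ Q_1$ persists on the completion (the only point not covered by density of $\und{S}(L_1)$ is the unit, checked directly), and likewise $F$ remains comultiplicative, which is the content of \eqref{new-F-Delta}.

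Next I would invoke the twisting identity \eqref{Q-2-twisted}, $Q_2^{\al}=e^{-\al}Q_2 e^{\al}$, where $e^{\pm\al}$ denote the operators ``multiply by the group-like element $e^{\pm\al}\in S(L_2)\,\hat{}$'' (these elements make sense and the identity holds because $\al$ has degree $0$ and $L_2$ is pronilpotent; cf.\ \eqref{ti-Q-cxp-beta}, \eqref{curv-sum}). Substituting into $Q_2^{\al}\circ F=F\circ Q_1$ and composing on the left with the operator ``multiply by $e^{\al}$'' (which is inverse to ``multiply by $e^{-\al}$'') yields precisely \eqref{F-Q1-Q2-al}:
\[
Q_2\bigl(e^{\al}F\bigr)=\bigl(e^{\al}F\bigr)Q_1 .
\]
Thus $U_{\al,F}:=e^{\al}F$ intertwines the \emph{untwisted} codifferentials $Q_1$ and $Q_2$, as required.

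It remains to record that $U_{\al,F}$ respects comultiplications. Since $S(L_2)\,\hat{}$ is a (topological) commutative algebra, $\D$ is a continuous algebra map, and $\al$ is a primitive element of degree $0$, the element $e^{\al}$ is group-like: $\D(e^{\al})=e^{\al}\,\wh{\otimes}\,e^{\al}$. Combining this with \eqref{new-F-Delta} gives, for every $X$,
\[
\D\bigl(e^{\al}F(X)\bigr)=\bigl(e^{\al}\,\wh{\otimes}\,e^{\al}\bigr)(F\otimes F)\D(X)=\bigl(U_{\al,F}\,\wh{\otimes}\,U_{\al,F}\bigr)\D(X),
\]
in the completed-tensor-product sense of Remark~\ref{rem:coalg-comp}; and $U_{\al,F}$ is continuous as a composite of continuous maps. (Note $U_{\al,F}(1)=e^{\al}$, which is exactly what makes enhanced morphisms more flexible than ordinary ones.) The step deserving the most care is not any one computation but the convergence/continuity bookkeeping: one must confirm that $e^{\al}F(X)$ converges in $S(L_2)\,\hat{}$ for every $X\in S(L_1)\,\hat{}$, that multiplication by $e^{\pm\al}$ and application of $Q_i$ are continuous, and that the ``coalgebra'' identities are read in the completed tensor product per Remark~\ref{rem:coalg-comp}. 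All of this follows from $L_i=\cF_1 L_i$, completeness of the filtrations, and their compatibility with the brackets and with the multiplication on $S(L_i)$ --- i.e.\ from pronilpotence --- so it is routine rather than substantive.
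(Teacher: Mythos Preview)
Your proof is correct and follows essentially the same approach as the paper: the paper's argument (which is the discussion immediately preceding the claim) extends $F$ to the completions with $F(1)=1$, invokes the twisting identity $Q_2^{\al}=e^{-\al}Q_2 e^{\al}$ to pass from $Q_2^{\al}\circ F=F\circ Q_1$ to \eqref{F-Q1-Q2-al}, and then notes that $e^{\al}F$ is ``obviously'' compatible with the comultiplications. You simply make the last step explicit by writing out the group-like computation for $e^{\al}$, and you add a paragraph of continuity bookkeeping that the paper leaves tacit.
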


Let $L_1, L_2, L_3$ be filtered $\sLie$-algebras and let 
$(\al_2, F)$ (resp. $(\al_3, G)$) be an enhanced morphism from 
$L_1$ to $L_2$ (resp. from $L_2$ to $L_3$). According to the above 
claim, $(\al_2, F)$ and $(\al_3, G)$ give us morphisms of cocommutative 
``coalgebras'':    
\begin{equation}
\label{exp-al2-F}
U_{\al_2, F} : (S(L_1)\,\hat{}, Q_1) \to (S(L_2)\,\hat{}, Q_2)
\end{equation}
and 
\begin{equation}
\label{exp-al3-G}
U_{\al_3, G} : (S(L_2)\,\hat{}, Q_2) \to (S(L_3)\,\hat{}, Q_3)\,.
\end{equation}

We observe that the composition $U_{\al_3, G} \circ U_{\al_2, F}$ acts 
on $X \in S(L_1)\,\hat{}$ as 
\begin{multline}
\label{composition-calc}
U_{\al_3, G} \circ U_{\al_2, F} (X) = e^{\al_3} G \big(e^{\al_2} F(X)\big) = \\ 
e^{\al_3} e^{G_*(\al_2)} \circ (e^{- G_*(\al_2)} G e^{\al_2}) \circ F (X)= 
e^{\al_3 + G_*(\al_2)} G^{\al_2} \circ F (X)\,.
\end{multline}

Therefore the composition  $U_{\al_3, G} \circ U_{\al_2, F}$ is a morphism 
from $S(L_1)\,\hat{}$ to $S(L_3)\,\hat{}$ of the form \eqref{exp-al-F} corresponding 
to the pair 
\begin{equation}
\label{composition}
(\al_3 + G_*(\al_2), G^{\al_2} \circ F)\,. 
\end{equation}

Thus we proved the following statement: 
\begin{prop} 
\label{comp-prop-Linfty-enh}
Equation \eqref{composition} defines a composition 
of enhanced morphisms for $\sLie$-algebras. Moreover, 
this composition is associative.  \qed
\end{prop}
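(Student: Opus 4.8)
The statement has two parts: (1) formula \eqref{composition} genuinely defines an enhanced morphism, and (2) the resulting composition is associative. For (1), I would observe that the preceding computation \eqref{composition-calc} already does almost all the work: we must check that the pair $(\al_3 + G_*(\al_2), G^{\al_2} \circ F)$ is a \emph{bona fide} enhanced morphism in the sense of Definition \ref{dfn:enhanced}, i.e.\ that $\al_3 + G_*(\al_2)$ is a MC element of $L_3$, and that $G^{\al_2}\circ F$ is a continuous $\infty$-morphism $L_1 \to L_3^{\al_3 + G_*(\al_2)}$. The first follows from \eqref{curv-sum} of Proposition \ref{prop:curv} together with fact (2) ($G_*(\al_2) \in \MC(L_3^{\al_3}) $ hence, unwinding the twist, $\al_3 + G_*(\al_2) \in \MC(L_3)$ — concretely, $\curv_{L_3}(\al_3 + G_*(\al_2)) = \curv_{L_3}(\al_3) + \curv_{L_3^{\al_3}}(G_*(\al_2)) = 0$). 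For the second: fact (3) gives $G^{\al_2}\maps L_2^{\al_2} \to L_3^{G_*(\al_2)}$, and fact (1) tells us twisting $L_3^{\al_3}$ by $G_*(\al_2)$ yields $L_3^{\al_3 + G_*(\al_2)}$; composing with $F\maps L_1 \to L_2^{\al_2}$ and noting continuity is preserved under composition and twisting, we get the required $\infty$-morphism. Alternatively — and more cleanly — I would simply invoke Claim \ref{cl:enhanced} in reverse: $U_{\al_3,G}\circ U_{\al_2,F}$ is, by \eqref{composition-calc}, a coalgebra homomorphism $(S(L_1)\,\hat{},Q_1) \to (S(L_3)\,\hat{},Q_3)$ of the form $X \mapsto e^{\beta}\Phi(X)$ with $\beta = \al_3 + G_*(\al_2)$ and $\Phi = G^{\al_2}\circ F$ satisfying $\Phi(1)=1$; by the equivalence established just before Claim \ref{cl:enhanced} (unwinding \eqref{exp-al-F}–\eqref{F-of-1}), any such datum corresponds to an enhanced morphism, so in particular $\beta \in \MC(L_3)$ and $\Phi$ is a continuous $\infty$-morphism $L_1 \to L_3^{\beta}$ automatically.

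\textbf{Associativity.}
For (2), the natural strategy is to transport the question through the faithful assignment $(\al,F) \mapsto U_{\al,F}$. The map sending an enhanced morphism to its coalgebra homomorphism $U_{\al,F}$ is injective (one recovers $\al$ as $p_{L_2}\circ U_{\al,F}(1)$ in the appropriate completion, and then $F$ from $e^{-\al}U_{\al,F}$), and by the computation \eqref{composition-calc} it is compatible with composition: $U_{(\al_3,G)\circ(\al_2,F)} = U_{\al_3,G}\circ U_{\al_2,F}$. Since composition of coalgebra homomorphisms of completed coalgebras is manifestly associative (it is just composition of maps of sets respecting the relevant structure — modulo the mild completed-tensor-product caveat of Remark \ref{rem:coalg-comp}, which does not affect associativity of the underlying maps), associativity of $\circ$ on enhanced morphisms follows by injectivity: if $U_{A}=U_{B}$ then $A=B$.

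\textbf{Where the work is.}
The only genuinely substantive point is justifying $U_{(\al_3,G)\circ(\al_2,F)} = U_{\al_3,G}\circ U_{\al_2,F}$, i.e.\ that \eqref{composition-calc} is valid — in particular the middle equality $G(e^{\al_2}F(X)) = e^{G_*(\al_2)}\,(e^{-G_*(\al_2)}G e^{\al_2})\,F(X)$, which rests on \eqref{U-cxp-al} ($G(e^{\al_2}) = e^{G_*(\al_2)}$ after extending $G$ to completions with $G(1)=1$) together with the fact that $G$, being a coalgebra homomorphism, is "multiplicative" against the $e^{\al_2}$ factor in the sense that $G(e^{\al_2}X) = G(e^{\al_2})\cdot\big(e^{-G_*(\al_2)}G e^{\al_2}\big)(X)$ — this last identity is precisely the content of fact (3), the twisting formula \eqref{twisted_map}, once one recognizes $e^{-G_*(\al_2)}Ge^{\al_2}$ as (the completed extension of) $G^{\al_2}$. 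I expect this identification to be the main obstacle: it requires carefully matching the combinatorial twisting formula \eqref{twisted_map} against the "conjugation" $e^{-G_*(\al_2)}(-)e^{\al_2}$ on completed coalgebras, and checking that all infinite sums converge in the filtration topology (which they do, since $L_1 = \cF_1 L_1$ and all maps are continuous, as already noted after \eqref{twisted_map}). Once \eqref{composition-calc} is in hand, everything else is formal. I would therefore present the proof as: first verify \eqref{composition-calc} line by line citing \eqref{U-cxp-al} and fact (3); then note the assignment to $U$'s is injective and composition-preserving; then conclude both well-definedness and associativity.
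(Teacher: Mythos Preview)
Your proposal is correct and follows the same approach as the paper: the paper's argument is precisely the computation \eqref{composition-calc} preceding the proposition (showing $U_{\al_3,G}\circ U_{\al_2,F}$ has the form $U_{\al_3+G_*(\al_2),\,G^{\al_2}\circ F}$), after which well-definedness and associativity are asserted with a bare \qedsymbol. You have simply made explicit what the paper leaves to the reader --- namely, that the assignment $(\al,F)\mapsto U_{\al,F}$ is injective and composition-preserving, so associativity is inherited from ordinary composition of maps --- and you correctly isolate the one nontrivial identification $e^{-G_*(\al_2)}Ge^{\al_2}=G^{\al_2}$ as the point requiring the twisting formula \eqref{twisted_map}.
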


\subsection{$\tLie$ is a symmetric monoidal category} 

Given two filtered $\sLie$ algebras $(L, \{\cdot,\cdot\},
\{\cdot,\cdot, \cdot\},\ldots)$ and $(\wt{L}, \{\cdot,\cdot \}\wt{~},
\{\cdot,\cdot, \cdot\}\wt{~},\ldots)$, one obtains a
filtered $\sLie$ structure on the direct sum $L \oplus \wt{L}$ 
by setting
\[
\{x_1 + x'_1, x_2 + x'_2,\ldots, x_k + x'_k\} :=
\{x_1, x_2,\ldots, x_k\} + \{x'_1, x'_2,\ldots, x'_k\}\wt{~}\,,
\] 
and 
$$
\cF_k  (L \oplus \wt{L}) : = (\cF_k L) \oplus  (\cF_k \wt{L})\,.
$$

If $\al$ and $\wt{\al}$ are MC elements of $L$ and $\wt{L}$, respectively, 
then $\al + \wt{\al} \in L \oplus \wt{L}$ is clearly a MC element of 
the $\sLie$-algebra $L\oplus \wt{L}$. Furthermore, the operation of 
twisting (by a MC element) is compatible with $\oplus$, i.e. the $\sLie$-algebra
$L^{\al} \oplus \wt{L}^{\wt{\al}}$ is canonically isomorphic to the $\sLie$-algebra
to $(L\oplus \wt{L})^{\al + \wt{\al}}$\,.

Let us now consider a pair of enhanced morphisms 
\begin{equation}
\label{two-enh-morph}
(\al, F) : L_1 \to L_2\,,  \qquad 
\textrm{and} \qquad 
(\wt{\al}, \wt{F}) : \wt{L}_1 \to \wt{L}_2\,.
\end{equation}

Extending the corresponding $\infty$-morphisms 
$$
F : \und{S}(L_1) \to  \und{S}(L^{\al}_2)\,, 
\qquad 
\wt{F} : \und{S}(\wt{L}_1) \to  \und{S}(\wt{L}^{\wt{\al}}_2)\,,  
$$
to $S(L_1)$ and $S(\wt{L}_1)$ respectively by declaring that 
$$
F(1) : = 1\,,  \qquad 
\textrm{and} \qquad \wt{F}(1) : = 1 
$$ 
and tensoring the resulting homomorphism of
cocommutative coalgebras (with counits) we get 
\begin{equation}
\label{F-otimes-wtF}
F \otimes \wt{F} : S(L_1) \otimes S(\wt{L}_1) \cong 
S(L_1 \oplus \wt{L}_1) \to  S(L^{\al}_2) \otimes S(\wt{L}^{\wt{\al}}_2) \cong S(L^{\al}_2 \oplus \wt{L}^{\wt{\al}}_2)\,. 
\end{equation}
 
Since the operation of twisting by a MC element commutes with $\oplus$, the restriction 
of \eqref{F-otimes-wtF} to $\und{S}(L_1 \oplus \wt{L}_1)$
gives us a desired $\infty$-morphism 
\begin{equation}
\label{F-oplus-wtF}
F \otimes \wt{F} \Big|_{\und{S}(L_1 \oplus \wt{L}_1)}   ~:~ \und{S}(L_1 \oplus \wt{L}_1) ~ \to ~ 
\und{S}\big( (L_2 \oplus \wt{L}_2)^{\al + \wt{\al}} \big)
\end{equation}
from the $\sLie$-algebra $L_1 \oplus \wt{L}_1$ to the $\sLie$-algebra 
$(L_2 \oplus \wt{L}_2)^{\al + \wt{\al}}$\,. 

Thus every pair \eqref{two-enh-morph} gives us 
the enhanced morphism 
\begin{equation}
\label{sum-of-morph}
(\al, F) \otimes (\wt{\al}, \wt{F}) : = \Big( \al + \wt{\al}, F \otimes \wt{F}  \Big|_{\und{S}(L_1 \oplus \wt{L}_1)}  \Big)
\end{equation}
from $L_1 \oplus \wt{L}_1$ to $L_2 \oplus \wt{L}_2$\,.

It is easy to verify that the operations $(L,\wt{L}) \mapsto L \oplus \wt{L}$
and \eqref{sum-of-morph} give
$\tLie$ the structure of a symmetric monoidal
category whose unit object is $\bfzero$.

\section{Integration of a $\tLie$-enriched category to a simplicial category}

In this section, we show that every $\tLie$-enriched category $\mC$ can be integrated to 
a simplicial category. We use this construction in subsequent paper \cite{HAform} to find 
a higher categorical structure formed by homotopy algebras 
of a fixed type. 

For this purpose, we need to recall the construction \cite{Ezra-infty} of the 
Deligne-Getzler-Hinich (DGH) $\infty$-groupoid $\mMC_{\bul}(L)$ of a nilpotent 
$\sLie$-algebra $L$\,.

Let $\Omega_{n}=\Omega^{\bullet}(\Delta^{n})$ denote the polynomial de
Rham complex on the $n$-simplex with coefficients in $\bbk$, and
$\{\Omega_{n} \}_{n \geq 0}$ the associated simplicial
dg commutative $\bbk$-algebra. 

Since, for every $n$, $\Om_n$ is a dg commutative algebra, the tensor product 
$$
L \otimes \Om_n
$$
is naturally a nilpotent $\sLie$-algebra. Furthermore, the simplicial 
structure on the collection $\{\Omega_{n} \}_{n \geq 0}$ gives us 
the structure of a simplicial set on the collection
\begin{equation}
\label{mMC-n-L}
\mMC_n(L) : = \MC(L \otimes \Om_{n})\,,
\end{equation}
where, as above, $\MC(\cL)$ denotes the set of MC elements of a $\sLie$-algebra $\cL$\,.

Due to \cite[Lemma 4.6]{Ezra-infty}, the simplicial set $\mMC_{\bul}(L)$ \eqref{mMC-n-L} is 
a Kan complex (a.k.a. an $\infty$-groupoid). 

In the examples we keep in mind \cite{thesis}, \cite{stable1}, 
\cite{HAform}, \cite{DP}, \cite{notes}, \cite{Chern}, \cite{DTT},
 \cite{DeligneTw}, \cite{Thomas-grt}, \cite{Thomas-KS} the $\sLie$-algebras are 
rarely nilpotent. However, the above construction can be easily extended to 
the case when the $\sLie$-algebra $L$ is filtered 
in the sense of Definition \ref{dfn:sLie-filtered}. 

In this case we have to replace $L \otimes \Om_n$ by the completed tensor product 
$$
L \hotimes \Om_n
$$
of the topological space $L$ (with the topology coming from the filtration) and 
the discrete topological space $\Om_n$. 

We claim that 
\begin{prop}
\label{prop:MC-is-Kan}
For every filtered $\sLie$-algebra $L$ the simplicial set with
\begin{equation}
\label{mMC-n-L-filtered}
\mMC_n (L) : =  \MC(L \hotimes \Om_{n})
\end{equation}
is a Kan complex.
\end{prop}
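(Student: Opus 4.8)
The plan is to reduce the filtered case to the nilpotent case by writing the filtered $\sLie$-algebra as an inverse limit of nilpotent ones and showing that the functor $\mMC_\bullet(-)$ sends this inverse limit to a limit of simplicial sets, and then invoke the already-cited fact that the relevant limit of Kan complexes, together with the surjectivity of the projection maps, forces the Kan condition. Concretely, first I would observe that for a filtered $\sLie$-algebra $L$ the quotients $L/\cF_k L$ are nilpotent $\sLie$-algebras, and that the filtration on $L$ induces a filtration on $L\hotimes\Om_n$ with $(L\hotimes\Om_n)/\cF_k(L\hotimes\Om_n)\cong (L/\cF_k L)\otimes\Om_n$, so that $L\hotimes\Om_n=\varprojlim_k (L/\cF_k L)\otimes\Om_n$ by completeness (this uses $\Om_n$ finite-dimensional in each cohomological degree, or at least flat, so that completed tensor commutes with the inverse limit). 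Since a MC element of an inverse limit of $\sLie$-algebras is the same as a compatible family of MC elements, this gives $\mMC_n(L)=\varprojlim_k \mMC_n(L/\cF_k L)$, i.e. an isomorphism of simplicial sets $\mMC_\bullet(L)\cong\varprojlim_k \mMC_\bullet(L/\cF_k L)$.

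Next I would record that each $\mMC_\bullet(L/\cF_k L)$ is a Kan complex by \cite[Lemma 4.6]{Ezra-infty}, and that the transition maps $\mMC_\bullet(L/\cF_{k+1}L)\to\mMC_\bullet(L/\cF_k L)$ are induced by surjective (strict, hence $\infty$-) morphisms of nilpotent $\sLie$-algebras whose kernels $\cF_k L/\cF_{k+1}L$ are abelian $\sLie$-algebras. The point is then that a tower of Kan complexes in which every transition map is a Kan fibration has a Kan inverse limit; so it suffices to show each transition map is a fibration. For this I would use that the map $(L/\cF_{k+1}L)\otimes\Om_n\to(L/\cF_k L)\otimes\Om_n$ is a surjection of nilpotent $\sLie$-algebras with abelian kernel $(\cF_k L/\cF_{k+1}L)\otimes\Om_n$, and appeal to the standard fact (Getzler, \cite{Ezra-infty}, or Hinich \cite{Hinich:1997}) that $\mMC_\bullet$ applied to such a surjection yields a fibration of simplicial sets — on an abelian $\sLie$-algebra $\mMC_\bullet$ computes a shifted Dold–Kan / Eilenberg–MacLane object and the relevant lifting is visible directly, while a general nilpotent surjection is built from abelian ones by a finite filtration on the kernel.

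Alternatively — and this is the cleaner route I would actually write — I would verify the horn-filling condition directly at the level of $L\hotimes\Om_\bullet$ and never pass to the tower except to borrow Getzler's lifting construction. Given a horn $\Lambda^n_j\to\mMC_\bullet(L)$, i.e. an MC element of $L\hotimes\Om^\bullet(\Lambda^n_j)$, one wants to extend it over $\Delta^n$; since $\Om^\bullet(\Delta^n)\to\Om^\bullet(\Lambda^n_j)$ is a surjective quasi-isomorphism of dg commutative algebras with nilpotent (in fact square-zero after one step) kernel $K$, one solves the MC equation order by order along the filtration $\{\cF_k L\hotimes\Om^\bullet\}$, at each stage choosing a contraction of $L/\cF_{k+1}L\otimes K$ relative to $L/\cF_{k+1}L\otimes(\text{image in }\Om^\bullet(\Lambda^n_j))$ exactly as in the proof of \cite[Lemma 4.6]{Ezra-infty}; completeness of the filtration guarantees the resulting infinite series converges in $L\hotimes\Om^\bullet(\Delta^n)$. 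The main obstacle is precisely this convergence/interchange-of-limits point: one must check that the step-by-step solution assembled from the associated graded pieces defines an honest element of the completed tensor product $L\hotimes\Om_n$ and genuinely solves the MC equation there, which is where completeness \eqref{L-complete} and compatibility of the filtration with the brackets (Definition \ref{dfn:sLie-filtered}) are essential; the purely simplicial bookkeeping of which faces of the horn are hit is identical to the nilpotent case and I would simply cite it.
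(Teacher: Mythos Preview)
Your first outline is essentially the paper's proof: the paper observes that the surjections $L/\cF_{k}L \to L/\cF_{k-1}L$ are strict morphisms of nilpotent $\sLie$-algebras, invokes \cite[Proposition 4.7]{Ezra-infty} directly to conclude the induced maps on $\mMC_\bullet$ are fibrations of Kan complexes, and then uses that the inverse limit of a tower of Kan fibrations is Kan (citing \cite{Goerss-Jardine}), together with $\mMC_\bullet(L)=\varprojlim_k \mMC_\bullet(L/\cF_kL)$. Your additional remarks about abelian kernels are correct but unnecessary once one has Getzler's Proposition~4.7 in hand; and your ``alternative cleaner route'' of direct horn-filling order by order along the filtration would also work but is in fact more labor than the tower argument, not less --- the paper's proof is three sentences.
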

\begin{proof} 
The canonical maps $L/\cF_{k}L \to
L/\cF_{k-1}L$ are surjective strict morphisms between nilpotent
$\sLie$-algebras. Hence, Proposition 4.7 in \cite{Ezra-infty} implies that
the induced maps $\mMC_{\bullet}(L/\cF_{k}L) \to
\mMC_{\bullet}(L/\cF_{k-1}L)$ are fibrations between Kan complexes. 
Therefore, the inverse limit of this tower of fibrations is a Kan complex (cf.\
\cite{Goerss-Jardine}[Sec.\ VI.1]).
Our proposition then follows since 
$$
\mMC_{\bul} (L) = \varprojlim_k \mMC_{\bul} (L / \cF_{k}L)\,.
$$
\end{proof}

Let us next observe that any continuous $\infty$-morphism $F \maps L \to \tilde{L}$
of filtered $\sLie$-algebras 
gives a collection of $\infty$-morphisms of $\sLie$-algebras
\begin{equation}
\label{U-level-n}
\begin{split}
F^{(n)} \maps  L \hotimes \Om_n &~ \to ~ \ti{L} \hotimes \Om_n \\
F^{(n)} \bigl(v_1 \tensor \omega_1, v_2 \tensor \omega_2,\ldots,v_m \tensor
\omega_m \bigr) &= \pm F(v_1,v_2,\ldots,v_m)\tensor \omega_1 \omega_2 \cdots \omega_m,
\end{split}
\end{equation}
where $v_i \in L$, $\omega_i \in \Omega_n$, and $\pm$ is the usual
Koszul sign.
This collection is obviously compatible with 
all the faces and all the degeneracies.  Hence, $F$ induces a morphism of simplicial sets 
\begin{equation}
\label{MC-U}
\mMC_{\bul}(F) : \mMC_{\bul}(L) \to  \mMC_{\bul}(\ti{L})
\end{equation}
given by the formula 
\begin{equation}
\label{MC-U-formula}
\mMC_{n}(F) (\al) : = F^{(n)}_*(\al)\,.
\end{equation}

\begin{prop} \label{prop:functor_yes}
The assignment 
$$
L \mapsto \mMC_{\bul}(L)
$$
extends naturally to a monoidal functor from the 
category $\tLie$ to the category of simplicial sets.
\end{prop}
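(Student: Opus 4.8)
The plan is to route the construction through the coalgebra picture of Claim~\ref{cl:enhanced} and Proposition~\ref{comp-prop-Linfty-enh}. First I would observe that $L \mapsto (S(L)\,\hat{}\,, Q)$ together with $(\al, F) \mapsto U_{\al, F}$ is a functor from $\tLie$ to the category of completed counital cocommutative ``coalgebras'' (in the loose sense of Remark~\ref{rem:coalg-comp}) and their codifferential-preserving homomorphisms: the identity enhanced morphism $(0,\id)$ is sent to $U_{0,\id} = \id$ by \eqref{exp-al-F}, and the computation \eqref{composition-calc} says precisely that $U_{\al_3, G}\circ U_{\al_2, F}$ equals $U$ applied to the composite \eqref{composition}. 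Then I would recall that an MC element $\beta$ of a filtered $\sLie$-algebra $\cL$ is the same datum as a group-like element $e^{\beta} \in S(\cL)\,\hat{}\,$ with $Q(e^{\beta}) = 0$, cf.\ \eqref{MC-eq-easy}, and that \eqref{U-cxp-al} (with the convention $F(1) = 1$) gives $U_{\al, F}(e^{\beta}) = e^{\al} F(e^{\beta}) = e^{\al + F_*(\beta)}$; since $U_{\al, F}$ intertwines the codifferentials, $\al + F_*(\beta)$ is again an MC element. Hence $L \mapsto \MC(L)$, $(\al, F) \mapsto \bigl(\beta \mapsto \al + F_*(\beta)\bigr)$, is a functor $\tLie \to \mathrm{Set}$, its functoriality being inherited from that of $U$ via the bijection $\beta \leftrightarrow e^{\beta}$.

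To upgrade this to $\mMC_{\bul}$ I would precompose, for each $n$, with the endofunctor $-\hotimes\Om_n$ of $\tLie$, which sends $(\al, F)$ to the enhanced morphism $(\al\otimes 1,\, F^{(n)}) \maps L_1\hotimes\Om_n \to L_2\hotimes\Om_n$; this is legitimate because $\curv(\al\otimes 1) = \curv(\al)\otimes 1 = 0$ and $(L_2\hotimes\Om_n)^{\al\otimes 1} = L_2^{\al}\hotimes\Om_n$. The result is, for each $n$, a functor $L \mapsto \mMC_n(L)$ sending $(\al, F)$ to $\beta \mapsto \al\otimes 1_{\Om_n} + F^{(n)}_*(\beta)$. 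It remains to check these maps are compatible with the faces and degeneracies: every map of $\Delta$ induces a \emph{unital} dg algebra homomorphism $\Om_n \to \Om_m$, which therefore fixes $1$ (so it commutes with adding $\al\otimes 1$) and commutes with the $F^{(\bul)}$ by the explicit formula \eqref{U-level-n}. Throughout one must, following Remark~\ref{rem:coalg-comp}, read the coalgebra identities in the completed setting; this causes no trouble here since only group-like elements and the codifferential intervene, for which the completed versions \eqref{MC-eq-easy}, \eqref{U-cxp-al}, \eqref{F-Q1-Q2-al} have already been established.

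For the monoidal structure I would use the canonical identification $(L\oplus\wt L)\hotimes\Om_n = (L\hotimes\Om_n)\oplus(\wt L\hotimes\Om_n)$ together with the tautological bijection $\MC(A\oplus B) = \MC(A)\times\MC(B)$, valid because the multi-brackets, and hence $\curv$, are block-diagonal on a direct sum of $\sLie$-algebras. This furnishes an isomorphism of simplicial sets $\mMC_{\bul}(L)\times\mMC_{\bul}(\wt L) \xrightarrow{\,\sim\,} \mMC_{\bul}(L\oplus\wt L)$ (in fact an equality, so the monoidal structure is strict), while $\mMC_{\bul}(\bfzero) = \Delta^0$ since $\bfzero\hotimes\Om_n = \bfzero$ has a unique MC element. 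Naturality of this isomorphism with respect to the tensor product \eqref{sum-of-morph} of enhanced morphisms reduces, after passing to group-like elements, to the identities $(\al + \wt\al)\otimes 1 = (\al\otimes 1) \oplus (\wt\al\otimes 1)$ and $(F\otimes\wt F)^{(n)}_*(\beta\oplus\wt\beta) = F^{(n)}_*(\beta)\oplus\wt F^{(n)}_*(\wt\beta)$, the latter being $e^{\beta\oplus\wt\beta} = e^{\beta}\otimes e^{\wt\beta}$ transported through $S(L_1\oplus\wt L_1)\,\hat{}\, \cong S(L_1)\,\hat{}\,\,\wh{\otimes}\,S(\wt L_1)\,\hat{}\,$. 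The coherence (associativity and unit) axioms then hold automatically: both $\oplus$ on $\tLie$ and $\times$ on $\mathrm{sSet}$ carry the evident associators and unitors, and the comparison maps come from the canonical identifications of underlying objects, so every coherence hexagon and triangle collapses to an identity. I expect the only genuinely fiddly point to be this last naturality verification --- bookkeeping with Koszul signs and with the $\wh{\otimes}$-versus-$\otimes$ subtlety of Remark~\ref{rem:coalg-comp}; once the functor $U$ is in place, everything else is essentially forced.
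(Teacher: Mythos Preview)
Your proposal is correct and arrives at the same functor as the paper, with the map on morphisms being $\beta \mapsto \al + F_*(\beta)$; but the route is genuinely different. The paper introduces an auxiliary Lemma~\ref{lem:shift} establishing, by a direct double-sum computation, that the square
\[
\begin{tikzpicture}
\matrix (m) [matrix of math nodes, row sep=2em, column sep=3em]
{ \mMC_{\bul}(L^{\al}) & \mMC_{\bul}(L) \\
  \mMC_{\bul}(\ti{L}^{F_*(\al)}) & \mMC_{\bul}(\ti{L}) \\ };
\path[->,font=\scriptsize]
(m-1-1) edge node[above]{$\Shift_{\al}$} (m-1-2)
(m-1-1) edge node[left]{$F^{\al}_{*}$} (m-2-1)
(m-1-2) edge node[right]{$F_{*}$} (m-2-2)
(m-2-1) edge node[below]{$\Shift_{F_*(\al)}$} (m-2-2);
\end{tikzpicture}
\]
commutes, and then defines $\mMC_{\bul}(\al,F):=\Shift_{\al}\circ F_*$ and checks functoriality by pasting two such squares. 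You instead observe that Proposition~\ref{comp-prop-Linfty-enh} already encodes this: since $U_{\al,F}(e^{\beta})=e^{\al+F_*(\beta)}$ and $U$ is functorial, the induced action on exponents is automatically functorial, so Lemma~\ref{lem:shift} becomes a corollary rather than an independent ingredient. This is cleaner and more conceptual. The paper's approach, on the other hand, keeps the argument entirely at the level of MC sets and avoids invoking group-like elements in the completed coalgebra (and the attendant Remark~\ref{rem:coalg-comp} caveats).

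One point you pass over quickly deserves a sentence of justification: that $-\hotimes\Om_n$ is an endofunctor of $\tLie$ requires checking $(G^{\al})^{(n)}=(G^{(n)})^{\al\otimes 1}$ and $(G\circ F)^{(n)}=G^{(n)}\circ F^{(n)}$, both of which are immediate from \eqref{U-level-n} but should be stated. Your treatment of the monoidal structure is otherwise slightly more complete than the paper's, since you explicitly address naturality of the comparison isomorphism with respect to enhanced morphisms, which the paper leaves implicit.
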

For the proof of Prop.\ \ref{prop:functor_yes}, we need the following lemma.

\begin{lem}
\label{lem:shift}
Let $\al$ be a MC element in $L$ and $L^{\al}$ be the filtered $\sLie$ algebra 
which is obtained from $L$ via twisting by $\al$. Then the following assignment 
\begin{equation}
\label{eq:zero-base}
\beta \in \MC\big( L^{\al} \hotimes \Om_n \big)  ~~\mapsto ~~
\al + \beta \in \MC\big( L \hotimes \Om_n \big)
\end{equation}
is an isomorphism of simplicial sets 
\begin{equation}
\label{Shift-al}
\Shift_{\al} : \mMC_{\bul}(L^{\al})  \to  \mMC_{\bul}(L)
\end{equation}
which sends the zero MC element of $L^{\al}$ to the MC 
element $\al$ in $L$. For every $\infty$-morphism $F$ of filtered 
$\sLie$-algebras $L \to \ti{L}$ the following diagram commutes: 
\begin{equation}
\label{diag-Shift}
\begin{tikzpicture}
\matrix (m) [matrix of math nodes, row sep=2.5em, column sep=4em]
{    \mMC_{\bul}(L^{\al})  &  \mMC_{\bul}(L)  \\
  \mMC_{\bul}(\,\ti{L}^{F_*(\al)}\,)  &  \mMC_{\bul}(\,\ti{L}\,)\,, \\ };
\path[->,font=\scriptsize]
(m-1-1) edge node[auto] {$\Shift_{\al}$}  (m-1-2)  edge node[left] {$\mMC_{\bul}(F^{\al})$} (m-2-1)
(m-1-2)  edge node[auto] {$\mMC_{\bul}(F)$}  (m-2-2) 
(m-2-1) edge node[auto] {$\Shift_{F_*(\al)}$} (m-2-2);
\end{tikzpicture}
\end{equation}
where $F^{\al}$ denotes the $\infty$-morphism $L^{\al} \to \ti{L}^{F_*(\al)}$  which is obtained 
from $F$ via twisting by the MC element $\al$. 
\end{lem}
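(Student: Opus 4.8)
Throughout, $\al \in \MC(L)$ is regarded as the degree-zero element $\al \otimes 1 \in L \hotimes \Om_n$; this is legitimate because $v \mapsto v \otimes 1$ is a strict morphism of filtered $\sLie$-algebras $L \to L \hotimes \Om_n$, so $\al \otimes 1 \in \MC(L\hotimes\Om_n)$, and because twisting commutes with $(-)\hotimes\Om_n$ (immediate from \eqref{diff-twisted} and \eqref{Linft_twisted}), so that $(L \hotimes \Om_n)^{\al} = L^{\al}\hotimes\Om_n$ as filtered $\sLie$-algebras. The plan is to show: (a) for each $n$ the affine map $\beta \mapsto \al + \beta$ is a bijection $\MC(L^{\al}\hotimes\Om_n) \to \MC(L\hotimes\Om_n)$ carrying $0$ to $\al$; (b) these bijections are natural in $n$, hence form an isomorphism of simplicial sets $\Shift_{\al}$; and (c) the square \eqref{diag-Shift} commutes. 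For (a) I would apply \eqref{curv-sum} inside $L \hotimes \Om_n$ to the pair $(\al, \beta)$, with $\beta$ an arbitrary degree-zero element of $L^{\al}\hotimes\Om_n$; since $\curv(\al) = 0$ this yields
\[
\curv(\al + \beta) \;=\; \pa^{\al}(\beta) + \sum_{m=2}^{\infty} \tfrac{1}{m!}\{\underbrace{\beta,\dots,\beta}_{m}\}^{\al}\,,
\]
and by \eqref{MC_equation}, \eqref{diff-twisted} and \eqref{Linft_twisted} the right-hand side is precisely the left-hand side of the MC equation for $\beta$ in $L^{\al}\hotimes\Om_n$. Hence $\al + \beta \in \MC(L\hotimes\Om_n)$ if and only if $\beta \in \MC(L^{\al}\hotimes\Om_n)$; the map is a bijection with inverse $\ga \mapsto \ga - \al$, and it takes the zero MC element of $L^{\al}$ to $\al$.

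For (b) I would use that the simplicial structure on $\mMC_{\bul}(-)$ is induced by that on $\{\Om_n\}$: a simplicial operator $\te$ acts via the strict $\sLie$-morphism $\id \hotimes \te^{*} \maps L \hotimes \Om_n \to L \hotimes \Om_m$, which is $\bbk$-linear, fixes $\al \otimes 1$ (since $\te^{*}(1) = 1$), and restricts to the corresponding map for $L^{\al}$. It therefore intertwines $\beta \mapsto \al + \beta$ at level $n$ with the analogous map at level $m$, and likewise for the inverse $\ga \mapsto \ga - \al$. So $\Shift_{\al}$ and $\Shift_{\al}^{-1}$ are morphisms of simplicial sets, and $\Shift_{\al}$ is an isomorphism taking the zero MC element to $\al$.

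For (c), since every arrow in \eqref{diag-Shift} is a morphism of simplicial sets it suffices to check commutativity level by level, i.e.\ to prove
\[
F^{(n)}_*(\al+\beta) \;=\; F_*(\al) + (F^{\al})^{(n)}_*(\beta) \qquad \text{in } \ti L \hotimes \Om_n
\]
for every $\beta \in \MC(L^{\al}\hotimes\Om_n)$, where $F^{(n)}$ is the induced $\infty$-morphism of \eqref{U-level-n}. I would pass to the completions $S(-)\,\hat{}$ and invoke three facts: (i) $e^{x+y} = e^{x}e^{y}$ for degree-zero $x,y$ (such elements are central in the graded-commutative algebra $S(L\hotimes\Om_n)\,\hat{}$); (ii) the identity $\Phi(e^{\al}X) = e^{\Phi_*(\al)}\,\Phi^{\al}(X)$ underlying the computation \eqref{composition-calc}, for an $\infty$-morphism $\Phi$ and a MC element $\al$, applied with $\Phi = F^{(n)}$, together with $F^{(n)}_*(\al) = F_*(\al)$ and $(F^{(n)})^{\al} = (F^{\al})^{(n)}$ — both routine comparisons of the formulas \eqref{U-level-n} and \eqref{twisted_map}; and (iii) $\Phi(e^{\ga}) = e^{\Phi_*(\ga)}$ from \eqref{U-cxp-al} (using $\Phi(1) = 1$). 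Chaining these,
\begin{align*}
e^{F^{(n)}_*(\al+\beta)} &= F^{(n)}\big(e^{\al+\beta}\big) = F^{(n)}\big(e^{\al}e^{\beta}\big)\\
&= e^{F_*(\al)}\,(F^{\al})^{(n)}\big(e^{\beta}\big)\\
&= e^{F_*(\al)}\,e^{(F^{\al})^{(n)}_*(\beta)} = e^{F_*(\al) + (F^{\al})^{(n)}_*(\beta)}\,,
\end{align*}
and applying the canonical projection $p_{\ti L\hotimes\Om_n}$ gives the required identity; since all arrows are simplicial, level-wise commutativity gives the commutativity of the square.

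The hard part is not conceptual but a matter of handling the completed (tensor) products carefully: one must make sure the shift is only ever by the degree-zero element $\al \otimes 1$, and verify cleanly that twisting commutes with $(-)\hotimes\Om_n$ on objects and with the passage $F \mapsto F^{(n)}$ on morphisms, so that in particular $(F^{(n)})^{\al}$ and $(F^{\al})^{(n)}$ are literally the same $\infty$-morphism $L^{\al}\hotimes\Om_n \to \ti L^{F_*(\al)}\hotimes\Om_n$. Once these compatibilities are in place, fact (ii) — and with it the whole lemma — is formal.
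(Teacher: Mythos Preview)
Your argument is correct. Parts (a) and (b) match the paper's proof essentially line for line: the paper also invokes \eqref{curv-sum} with $\curv(\al)=0$ to see that $\beta\mapsto\al+\beta$ and $\ga\mapsto\ga-\al$ exchange MC elements, and then observes that $\al$ is constant in the simplicial direction to conclude that $\Shift_\al$ is a simplicial isomorphism.

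For part (c) you take a genuinely different route. The paper proves commutativity of \eqref{diag-Shift} by a bare-hands series computation: it expands $F_*(\al+\beta)=\sum_{k\ge1}\tfrac{1}{k!}F'((\al+\beta)^k)$ binomially and compares term by term with $F_*(\al)+F^\al_*(\beta)=\sum_{l\ge1}\tfrac{1}{l!}F'(\al^l)+\sum_{k\ge1}\sum_{l\ge0}\tfrac{1}{l!k!}F'(\al^l\beta^k)$, then rearranges the double sum. You instead leverage the coalgebra/exponential machinery the paper has already set up (Claim~\ref{cl:enhanced}, \eqref{U-cxp-al}, and the computation \eqref{composition-calc}) to obtain the identity $e^{F^{(n)}_*(\al+\beta)}=e^{F_*(\al)+(F^\al)^{(n)}_*(\beta)}$ in one line, then project. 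Your approach is cleaner and shows that the commutativity of \eqref{diag-Shift} is formally the same fact as the associativity calculation \eqref{composition-calc}; the paper's approach has the virtue of being entirely self-contained and not requiring the auxiliary compatibilities $(F^{(n)})^\al=(F^\al)^{(n)}$ and $F^{(n)}_*(\al)=F_*(\al)$ that you (correctly) isolate and verify.
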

\begin{proof} 
Eq.\ \eqref{curv-sum} from Prop.\ \ref{prop:curv} implies that map \eqref{eq:zero-base} is well defined, and it is clearly
injective. If $\gamma \in \MC\big( L \hotimes \Om_n \big)$, then
$\curv(\gamma) = \curv\bigl(\alpha + (\gamma -\alpha)
\bigr)=0$. Hence, \eqref{curv-sum} implies that
\[
\gamma -\alpha \in \MC\big( L^{\al} \hotimes \Om_n \big), 
\] 
so map \eqref{eq:zero-base} is also surjective. Since $\alpha$ is
constant as an element of $L \hotimes \Omega_\bullet$,
\eqref{eq:zero-base} induces the isomorphism of simplicial sets
\eqref{Shift-al}.

To show that diagram \eqref{diag-Shift} commutes,
suppose that $\beta \in \mMC_{\bul}(L^{\al})$. Then we have
\begin{equation} \label{eq:shift-sum1}
\begin{split}
\bigl(\mMC_{\bul}(F) \circ \Shift_{\al} \bigr) (\beta) &= F_{\ast}(\al + \beta) 
= \sum_{k \geq 1} \frac{1}{k!} F'\bigl( (\alpha + \beta)^k \bigr) 
=\sum_{k \geq 1} \sum_{l=0}^{k} \frac{1}{l! (k-l)!} F' \bigl(\alpha^l
\beta^{k-l} \bigr).
\end{split}
\end{equation}
On the other hand,
\begin{equation}
\begin{split}
\bigl(\Shift_{F_{\ast}(\alpha)} \circ \mMC_{\bul}(F^{\al}) \bigr)(\beta) &=
F_{\ast}(\alpha) + F^{\al}_{\ast}(\beta) = \sum_{l \geq 1} \frac{1}{l!} F'(\alpha^l) + \sum_{k \geq 1}
\frac{1}{k!}(F^{\alpha})' (\beta^k)\\
& =\sum_{l \geq 1} \frac{1}{l!} F'(\alpha^l) + \sum_{k \geq 1} \sum_{l
  \geq 0} \frac{1}{l!k!}F'(\alpha^l \beta^k).
\end{split}
\end{equation}
After rearranging terms, we see that the two compositions above are equal.
\end{proof}
\vspace{.5cm}

\begin{proof}[ of Prop. \ref{prop:functor_yes}]
To a morphism
\[
L_1 \xto{(\al, F)} L_2
\]
in $\tLie$, we assign the morphism of simplicial sets
\begin{equation}
\begin{split}
\mMC_{\bul}(\al,F) \maps \mMC_{\bul}(L_1) \to \mMC_{\bul}(L_2) \\ 
\mMC_{\bul}(\al,F) := \Shift_{\alpha} \circ F_{\ast}.
\end{split}
\end{equation}

Given another morphism in $\tLie$
\[
L_2 \xto{(\beta, G)} L_3,
\]
we need to show that this assignment respects composition, which,
because of Eq.\ \eqref{composition}, is equivalent to verifying the equality
\begin{equation} \label{eq:functor_yes_eq1}
\mMC_{\bul}(\beta + G_*(\al), G^{\al} \circ F) = \mMC_{\bul}(\beta,G)
\circ \mMC_{\bul}(\al,F).
\end{equation}
Observe that the $\sLie$ algebras
$(L^{\beta}_3)^{G_{\ast}(\al)}$ and $L_{3}^{\beta +G_{\ast}(\al)}$
are equal (not just isomorphic). Therefore, we have the following diagram 
\[
\begin{tikzpicture}
\matrix (m) [matrix of math nodes, row sep=2.5em, column sep=4em]
{    
\mMC_{\bul}(L_1)  &  \mMC_{\bul}(L^{\al}_2)  & \mMC_{\bul}(L_2) \\
& \mMC_{\bul} \bigl( (L_3^{\beta})^{G_{\ast}(\al)} \bigr) & \mMC_{\bul}(L^{\beta}_{3}) \\
&\mMC_{\bul}(L_{3}^{\beta +G_{\ast}(\al)}) & \mMC_{\bul}(L_3) \\
};
 \path[->,font=\scriptsize]
(m-1-1) edge node[auto] {$F_\ast$}  (m-1-2)  
(m-1-2) edge node[auto]{$\Shift_\al$} (m-1-3) 
(m-1-3) edge node[auto] {$G_\ast$} (m-2-3) 
(m-2-3) edge node[auto]{$\Shift_\beta$} (m-3-3)
(m-1-2) edge node[auto]{$G^{\al}_\ast$} (m-2-2)
(m-2-2) edge node[above]{$\Shift_{G_\ast(\al)}$} (m-2-3)
(m-2-2) edge node[above,sloped] {$=$}  (m-3-2)  
(m-3-2) edge node[above]{$\Shift_{\beta +G_{\ast}(\al)}$} (m-3-3)  
;
\end{tikzpicture}
\]
Lemma \ref{lem:shift} implies that the top rectangle of the diagram
commutes, and the definition of $\Shift_{\beta +G_{\ast}(\al)}$
implies that the lower rectangle commutes. Hence, Eq.\
\eqref{eq:functor_yes_eq1} holds.

Finally, given a pair of $\sLie$-algebras $L$, $\tilde{L}$, there is
a natural isomorphism
\[
\mMC_{\bul}(L \oplus \tilde{L}) \cong \mMC_{\bul}(L) \times \mMC_{\bul}(\tilde{L}).
\]
Indeed, for each $n \geq 0$, we have the following natural isomorphisms:
\begin{align*}
\varprojlim_k \bigl(L \oplus \tilde{L} /\cF_k(L  \oplus  \tilde{L} ) 
\otimes \Omega_n  \bigr) & \cong
\varprojlim_k \Bigl( \bigl(L/\cF_kL \oplus  \tilde{L}/\cF_k\tilde{L} \bigr)
\otimes \Omega_n \Bigr)  \\
& \cong  \varprojlim_k \bigl(L/\cF_kL \otimes \Omega_n \oplus  \tilde{L}/\cF_k\tilde{L} 
 \otimes \Omega_n \bigr) \\
& \cong \bigl( \varprojlim_k L/\cF_kL \otimes \Omega_n \bigr) \oplus 
\bigl ( \varprojlim_k \tilde{L}/\cF_k \tilde{L} \otimes \Omega_n \bigr),
\end{align*}
where the last line above follows from the fact that both the projective limit
and direct sum are limits, and hence commute. So we have exhibited a
natural isomorphism of $\sLie$-algebras
\[
(L \oplus \tilde{L}) \widehat{\otimes} \Omega_n \cong L
\widehat{\otimes} \Omega_n \oplus \tilde{L} \widehat{\otimes} \Omega_n.
\]
By combining this with the obvious natural isomorphism of sets:
\[
\MC \bigl (L \widehat{\otimes} \Omega_n \oplus \tilde{L}
\widehat{\otimes} \Omega_n \bigr) \cong 
\MC \bigl (L \widehat{\otimes} \Omega_n \bigr) \times \MC \bigl( \tilde{L}
\widehat{\otimes} \Omega_n \bigr),
\]
it follows that  $\mMC_{\bul}(-)$ is indeed a strong monoidal functor.
\end{proof}

\begin{remark}
As alluded to in the introduction, the functor constructed in
Prop.\ \ref{prop:functor_yes} demonstrates the utility of enhanced
morphisms. Simplicial morphisms which lie in the image of $\mMC_{\bul}(-)$ 
need not preserve the base point $0$. For example,
\[
\Hom_{\tLie}(\bfzero,L) \cong
\Hom_{\mathsf{sSet}} \bigl(\Delta^{0},\mMC_{\bul}(L) \bigr) \cong \MC(L).
\]
\end{remark}

Finally, let $\mC$ be a $\tLie$-enriched category \cite{Kelly}. Then, to every 
pair of objects $A, B$ of $\mC$, we may assign the Kan complex 
\begin{equation}
\label{Kan-A-B}
\mMC_{\bul}\big( \Map(A,B) \big)\,,
\end{equation}
where $\Map(A,B)$ denotes the mapping space (i.e. a filtered 
$\sLie$-algebra) corresponding to the pair $A,B$ in $\mC$.  
Since the functor described in Prop.\ \ref{prop:functor_yes}
is a monoidal functor from the 
category $\tLie$ to the category of simplicial sets, 
the Kan complexes \eqref{Kan-A-B} assemble into a category 
enriched over $\infty$-groupoids and we conclude that 
\begin{thm}
\label{thm:main}
For every $\tLie$-enriched category $\mC$ the assignment 
$$
(A,B) \in \Objects(\mC) \times  \Objects(\mC) ~ \mapsto ~ \mMC_{\bul}\big( \Map(A,B) \big)
$$
gives us a category enriched over $\infty$-groupoids 
(a.k.a. Kan complexes).  \qed
\end{thm}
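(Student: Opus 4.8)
The plan is to deduce the statement from the two results already established, namely Proposition~\ref{prop:MC-is-Kan} (each $\mMC_{\bul}(L)$ is a Kan complex) and Proposition~\ref{prop:functor_yes} (the assignment $L \mapsto \mMC_{\bul}(L)$ is a monoidal functor $\tLie \to \mathsf{sSet}$), via the standard ``change of enrichment along a monoidal functor'' construction. I would first recall the relevant categorical generality: any lax monoidal functor $\Phi \maps (\mathcal{V},\otimes_{\mathcal{V}},\mathbf{1}_{\mathcal{V}}) \to (\mathcal{W},\otimes_{\mathcal{W}},\mathbf{1}_{\mathcal{W}})$ induces a functor from $\mathcal{V}$-enriched categories to $\mathcal{W}$-enriched categories which keeps the same objects, sends a hom-object $\mathcal{C}(A,B)$ to $\Phi(\mathcal{C}(A,B))$, defines composition by precomposing $\Phi$ applied to the composition morphism of $\mathcal{C}$ with the lax-structure map $\Phi(\mathcal{C}(B,C))\otimes_{\mathcal{W}}\Phi(\mathcal{C}(A,B)) \to \Phi\bigl(\mathcal{C}(B,C)\otimes_{\mathcal{V}}\mathcal{C}(A,B)\bigr)$, and defines the unit at $A$ by composing $\mathbf{1}_{\mathcal{W}} \to \Phi(\mathbf{1}_{\mathcal{V}})$ with $\Phi$ of the unit of $\mathcal{C}$ at $A$. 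The enriched associativity and unitality axioms for the new enrichment follow from those of $\mathcal{C}$ together with the coherence axioms of the lax monoidal structure on $\Phi$ by a routine diagram chase; I would cite \cite{Kelly} and not reproduce it.

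Next I would apply this with $\mathcal{V} = \tLie$, $\mathcal{W} = \mathsf{sSet}$, and $\Phi = \mMC_{\bul}(-)$. Proposition~\ref{prop:functor_yes} supplies exactly the needed data: $\mMC_{\bul}(-)$ is a functor, it is equipped with the natural isomorphism $\mMC_{\bul}(L \oplus \tilde{L}) \cong \mMC_{\bul}(L) \times \mMC_{\bul}(\tilde{L})$ (so it is even strong monoidal), and $\mMC_{\bul}(\bfzero) \cong \Delta^{0}$, the unit of $\mathsf{sSet}$. Thus, for a $\tLie$-enriched category $\mC$, the construction produces a simplicial category with object set $\Objects(\mC)$, hom-spaces $\mMC_{\bul}\bigl(\Map(A,B)\bigr)$, composition
\[
\mMC_{\bul}(\Map(B,C)) \times \mMC_{\bul}(\Map(A,B)) \xrightarrow{\ \cong\ } \mMC_{\bul}\bigl(\Map(B,C)\oplus \Map(A,B)\bigr) \xrightarrow{\ \mMC_{\bul}(\circ)\ } \mMC_{\bul}(\Map(A,C)),
\]
with $\circ$ the composition enhanced morphism of $\mC$, and identity at $A$ the $0$-simplex picked out by $\Delta^{0}\cong\mMC_{\bul}(\bfzero)\xrightarrow{\mMC_{\bul}(\id_A)}\mMC_{\bul}(\Map(A,A))$, where $\id_A$ is the unit enhanced morphism. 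The simplicial-category axioms are then the diagram chase of the previous paragraph.

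Finally, I would invoke Proposition~\ref{prop:MC-is-Kan}: since each mapping space $\Map(A,B)$ is a filtered $\sLie$-algebra, $\mMC_{\bul}\bigl(\Map(A,B)\bigr)$ is a Kan complex for every pair $A,B$. Hence the simplicial category just built is enriched over $\infty$-groupoids, which is the assertion of the theorem.

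I do not expect a genuine obstacle here: all the substance — the Kan property and, crucially, the monoidality of $\mMC_{\bul}(-)$ — has been done in Propositions~\ref{prop:MC-is-Kan} and~\ref{prop:functor_yes}. The only point deserving a word of care is coherence: one must know that the associativity and unit constraints of $\mMC_{\bul}(-)$, built from the natural isomorphisms involving $\varprojlim$ and $\oplus$, are compatible with the pentagon and triangle identities so that the induced enrichment satisfies the associativity and unit laws strictly. This is automatic because those constraints are assembled entirely from (co)limit universal properties, which are coherent by construction; I would make this remark rather than expand it.
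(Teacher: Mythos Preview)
Your proposal is correct and follows exactly the approach the paper takes: the paper gives no separate proof of Theorem~\ref{thm:main} (the statement carries a \qed), observing in the preceding paragraph that since $\mMC_{\bul}(-)$ is a monoidal functor (Proposition~\ref{prop:functor_yes}) and lands in Kan complexes (Proposition~\ref{prop:MC-is-Kan}), the change-of-enrichment construction yields the result. Your write-up simply makes this standard argument explicit.
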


~\\
~\\

\noindent\textsc{Department of Mathematics,\\
Temple University, \\
Wachman Hall Rm. 638\\
1805 N. Broad St.,\\
Philadelphia PA, 19122 USA \\
\emph{E-mail address:} {\bf vald@temple.edu} }

~\\

\noindent\textsc{Department of Mathematics\\
University of Louisiana at Lafayette\\
217 Maxim Doucet Hall, P.O. Box 43568 \\
Lafayette, LA 70504-3568 USA\\
\emph{E-mail address:} {\bf crogers@louisiana.edu}}


\begin{thebibliography}{99}

\bibitem{thesis} V.A. Dolgushev, 
A Proof of Tsygan's formality conjecture for an arbitrary 
smooth manifold,  Ph.D. thesis, M.I.T., \href{http://arxiv.org/abs/math/0504420}{arXiv:math/0504420}.

\bibitem{stable1} V.A. Dolgushev, Stable formality quasi-isomorphisms for 
Hochschild cochains, \href{http://arxiv.org/abs/1109.6031}{arXiv:1109.6031}.
 
\bibitem{HAform} V.A. Dolgushev, A.E. Hoffnung, and C.L. Rogers, 
What do homotopy algebras form?  Adv. Math. {\bf 274} (2015) 562--605;
\href{http://arxiv.org/abs/1406.1751}{arXiv:1406.1751}. 

\bibitem{DP} V.A. Dolgushev and B. J. Paljug, Tamarkin's construction is equivariant 
with respect to the action of the Grothendieck-Teichmueller group, JHRS,
DOI 10.1007/s40062-015-0115-x; \href{http://arxiv.org/abs/1402.7356}{arXiv:1402.7356}.

\bibitem{notes} V.A. Dolgushev and C.L. Rogers, 
Notes on Algebraic Operads, Graph Complexes, and Willwacher's Construction,
{\it Mathematical aspects of quantization,} 25--145, 
Contemp. Math., {\bf 583}, Amer. Math. Soc., Providence, RI, 2012 ;
\href{http://arxiv.org/abs/1202.2937}{arXiv:1202.2937}.

\bibitem{Chern} V.A. Dolgushev, C.L. Rogers, and T. Willwacher,  
Kontsevich's graph complex, GRT, and the deformation complex of the
sheaf of polyvector fields,  Ann. of Math. (2) {\bf 182}, 3 (2015) 855--943;
\href{http://arxiv.org/abs/1211.4230}{arXiv:1211.4230}. 

\bibitem{DTT} V.A. Dolgushev, D.E. Tamarkin, and B.L. Tsygan, The homotopy Gerstenhaber 
algebra of Hochschild cochains of a regular algebra is formal, 
J. Noncommut. Geom. {\bf 1}, 1 (2007) 1--25; 
\href{http://arxiv.org/abs/math/0605141}{arXiv:math/0605141}.

\bibitem{DeligneTw} V. Dolgushev and T. Willwacher, 
Operadic Twisting -- with an application to Deligne's conjecture, 
J. Pure Appl. Algebra {\bf 219}, 5 (2015) 1349--1428; 
\href{http://arxiv.org/abs/1207.2180}{arXiv:1207.2180}. 

\bibitem{Ezra-infty} E. Getzler, Lie theory for 
nilpotent $L_\infty$-algebras, Ann. of Math. (2) {\bf 170}, 1 (2009) 271--301; 
\href{http://arxiv.org/abs/math/0404003}{arXiv:math/0404003}.
  
\bibitem{Goerss-Jardine}
P.G.\ Goerss\ and\ J. F.\  Jardine,  Simplicial homotopy theory,
{\it Progress in Mathematics}, {\bf 174}, Birkh\"auser, Basel, 1999.

\bibitem{Hinich:1997} V.\ Hinich, Descent of Deligne groupoids, 
Internat. Math. Res. Notices {\bf 1997}, no.~5, 223--239; 
\href{http://arxiv.org/abs/alg-geom/9606010}{ arXiv:alg-geom/9606010}.

\bibitem{Kelly} G. M. Kelly, {\it Basic concepts of enriched category
    theory}, London Mathematical Society Lecture Note Series, 64,
  Cambridge Univ. Press, Cambridge, 1982.
  
\bibitem{Thomas-grt} T. Willwacher, M. Kontsevich's graph complex and the 
Grothendieck-Teichmueller Lie algebra, Invent. Math. {\bf 200}, 3 (2015) 671--760;
\href{http://arxiv.org/abs/1009.1654}{arXiv:1009.1654}.

\bibitem{Thomas-KS} T. Willwacher, The Homotopy braces formality morphism,
to appear in Duke Math. Journal; \href{http://arxiv.org/abs/1109.3520}{arXiv:1109.3520}.
    
\end{thebibliography}
\end{document}